\newtheorem{theorem}{Theorem}
\newtheorem{defn}[theorem]{Definition}
\newtheorem{lemma}[theorem]{Lemma}
\newtheorem{corl}[theorem]{Corollary}
\numberwithin{theorem}{section}
\numberwithin{equation}{section}
\newcommand{\dz}{\partial_z}
\newcommand{\disk}{\mathbb{D}}
\newcommand{\zbar}{\overline{z}}
\newcommand{\dzbar}{\partial_{\zbar}}
\newcommand{\suminf}{\sum_{n = 1}^{\infty}}
\newcommand{\sumN}{\sum_{n = 1}^{N}}
\newcommand{\eps}{\epsilon}
\newcommand{\kbar}{\overline{K}}
\newcommand{\C}{\mathbb{C}}
\title{Stretching and Rotation Sets of Quasiconformal Mappings}
\author{Tyler Bongers %
	\thanks{Email: \texttt{charlesb@math.msu.edu} \\
	2010 \emph{Mathematics Subject Classification}: 30C62, 28A78}
}
\affil{Department of Mathematics, Michigan State University}
\date{\today}
\begin{document}
\maketitle

\begin{abstract}
Quasiconformal maps in the plane are orientation preserving homeomorphisms that satisfy certain distortion inequalities; infinitesimally, they map circles to ellipses of bounded eccentricity. Such maps have many useful geometric distortion properties, and yield a flexible and powerful generalization of conformal mappings. In this work, we study the singularities of these maps, in particular the sizes of the sets where a quasiconformal map can exhibit given stretching and rotation behavior. We improve results by Astala-Iwaniec-Prause-Saksman and Hitruhin to give examples of stretching and rotation sets with non-sigma-finite measure at the appropriate Hausdorff dimension. We also improve this to give examples with positive Riesz capacity at the critical homogeneity, as well as positivity for a broad class of gauged Hausdorff measures at that dimension.
\end{abstract}

\tableofcontents
%
%
\section{Introduction}
We say that a map $f \in W^{1, 2}_{\text{loc}}(\mathbb{C})$ is $K$-quasiconformal if it is an orientation preserving homeomorphism and satisfies the distortion inequality
$\max_{\alpha} |\partial_{\alpha} f| \le K \min_{\alpha} |\partial_{\alpha} f|$ almost everywhere, where $\partial_{\alpha}$ is a directional derivative. Geometrically, $f$ maps infinitesimal circles to infinitesimal ellipses; these can be viewed as perturbations of conformal maps, which are $1$-quasiconformal. Such maps are also realized as solutions to the Beltrami equation
$$\partial_{\overline{z}} f = \mu(z) \partial_z f$$
where the coefficient $\mu$ satisfies $\|\mu\|_{\infty} \le \frac{K - 1}{K + 1} < 1$. 

We are interested in geometric distortion properties of these maps. Given $z \in \mathbb{C}$, we say that $f$ stretches with exponent $\alpha$ and rotates with exponent $\gamma$ at $z$ if there exist scales $r_n \to 0$ with
$$\lim_{n \to \infty} \frac{\log |f(z + r_n) - f(z)|}{\log r_n} = \alpha \text{ and } \lim_{n \to \infty} \frac{\operatorname{arg}(f(z + r_n) - f(z))}{\log |f(z + r_n) - f(z)|} = \gamma.$$
Here, the argument is interpreted as the total angular change with respect to $f(z)$ along the image of the ray $[z + r_n, \infty)$; see section 2 or \cite{AIPS} for the full definitions. 

A classical theorem of Mori (see \cite{Mori}) states that every $K$-quasiconformal map is locally $1/K$-H\"older continuous, which implies that $1/K \le \alpha \le K$. In the more recent \cite{AIPS}, Astala, Iwaniec, Prause and Saksman improved this substantially to give the exact range of both stretching and rotation exponents which can be realized by a $K$-quasiconformal map $f$: if we let $B_K \subset \mathbb C$ be the open disk centered at $\frac 1 2 (K + \frac 1 K)$ with radius $\frac 1 2 (K - \frac 1 K)$, then $f$ can stretch like $\alpha$ and rotate like $\gamma$ if and only if $\alpha(1 + i \gamma) \in \overline{B_K}$. As a particular application, this gives the precise rotation behavior that a bilipschitz map can exhibit. Moreover, this work gave the precise multifractal spectrum $F_K(\alpha, \gamma)$ - that is, the maximal possible Hausdorff dimension of the simultaneous stretching and rotation set of such maps; the sharp result was the following theorem.
\begin{theorem}\label{thm:main_theorem1}
If $f : \mathbb{C} \to \C$ is a $K$-quasiconformal mapping with $K > 1$, and $\alpha(1 + i \gamma) \in B_K$, then the Hausdorff dimension of the stretching and rotation set $E_f$ of $f$ is bounded by
$$\dim_{\mathcal{H}} E_F \le F_K(\alpha, \gamma) := 1 + \alpha - \frac{K + 1}{K - 1} \sqrt{(1 - \alpha)^2 + \frac{4K \alpha^2 \gamma^2}{(K + 1)^2}}$$
and this result is sharp at the level of dimension.
\end{theorem}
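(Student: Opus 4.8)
The statement splits into an upper bound on the dimension — which is due to Astala--Iwaniec--Prause--Saksman, and whose argument I would recall — together with matching examples witnessing sharpness. For the upper bound, I would first normalize: since the stretching and rotation conditions are local and unaffected by pre- and post-composition with conformal maps, translate so the relevant part of $E_f$ lies in $\tfrac12\disk$, and replace $f$ by a principal $K$-quasiconformal map (conformal off $\disk$, fixing $\infty$) with the same Beltrami coefficient $\mu$ on a neighborhood of $\tfrac12\disk$, so that $\|\mu\|_\infty = k := \tfrac{K-1}{K+1}$ and $\operatorname{supp}\mu\subset\disk$. Then embed $f$ in the holomorphic family $\{f^\lambda\}_{|\lambda|<1}$ of principal quasiconformal maps solving $\dzbar f^\lambda = \lambda\tfrac{\mu}{k}\dz f^\lambda$, so $f^0=\mathrm{id}$, $f^k=f$, each $f^\lambda$ is $\tfrac{1+|\lambda|}{1-|\lambda|}$-quasiconformal, and $\lambda\mapsto f^\lambda(z)$ is holomorphic for each $z$.

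The analytic core is a twisted refinement of Astala's area-distortion theorem: for disjoint disks $\{B(z_j,r_j)\}\subset\tfrac12\disk$ and a complex exponent $s$ in a suitable (spectrum-dependent) range, one wants a bound of the shape
$$\Bigl| \sum_j \operatorname{diam}\!\bigl(f(B(z_j,r_j))\bigr)^{s}\,\zeta_j \Bigr| \le C_{K,s}\Bigl(\sum_j r_j^2\Bigr)^{\theta(s)},$$
where $\zeta_j$ is a unimodular weight recording the total winding of $f$ on an annulus around $z_j$; equivalently, one estimates a complex integral $\int_{\disk}(\text{Jacobian-type density of }f)^{s}$, which is holomorphic in $s$. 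Ordinary area distortion is the endpoint $s=2$, $\zeta_j\equiv1$, $\theta=1/K$, and it is blind to rotation; the rest of the $s$-range — in particular the imaginary part, which detects rotation — is obtained by running Astala's argument inside the family $\{f^\lambda\}$, or by subharmonicity of $\log$ of the above integral in $s$ interpolated against the sharp $L^p$-integrability of $\dz f$. Granting this, the covering argument is standard: for $z_0\in E_f$ there are scales $r_n\to0$ with $\operatorname{diam} f(B(z_0,r_n))\asymp r_n^{\alpha}$ and winding $\asymp\gamma\alpha\log r_n$; fixing $\delta>F_K(\alpha,\gamma)$ and $\eps>0$, a Besicovitch/Vitali argument extracts a disjoint family of such ``good'' balls of radius $<\eps$ whose dilates cover $E_f$, and feeding them into the twisted inequality with $s=s(\alpha,\gamma,\delta)$ chosen optimally makes the resulting bound on $\mathcal H^\delta(E_f)$ tend to $0$ as $\eps\to0$. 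Such an $s$ exists precisely for $\delta>F_K(\alpha,\gamma)$: $F_K$ is the value of this optimization — a critical point of the associated pressure functional — and carrying out that one-variable extremization and simplifying reproduces $1+\alpha-\tfrac{K+1}{K-1}\sqrt{(1-\alpha)^2+\tfrac{4K\alpha^2\gamma^2}{(K+1)^2}}$, the square root being the signature of the extremization.

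For the matching examples, given $(\alpha,\gamma)$ with $\beta:=\alpha(1+i\gamma)\in B_K$, note that the spiral-stretch map $S_\beta(z)=z|z|^{\beta-1}$ has constant Beltrami coefficient $\tfrac{\beta-1}{\beta+1}\tfrac{z}{\bar z}$, of modulus $<k$, and stretches like $\alpha$ and rotates like $\gamma$ at the origin. I would build a self-similar Cantor set $E\subset\disk$ and a $K$-quasiconformal $f$ whose Beltrami coefficient, on each generation-$n$ disk of the construction, is a translated and rotated copy of that of $S_\beta$ (with a small conformal error on the gluing regions), the contraction ratios being tuned so that the construction is ``balanced'' — the pressure-zero condition — which forces $\dim_{\mathcal H}E$ to be as close to $F_K(\alpha,\gamma)$ as desired. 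By the self-similar iteration the increments $f(z_0+r_n)-f(z_0)$ behave like $S_\beta(r_n)$ along the construction scales for every $z_0\in E$, so $E\subseteq E_f$, giving the reverse inequality at the level of dimension.

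The main obstacle is the twisted area-distortion estimate with the \emph{sharp} exponents: since plain area distortion cannot see rotation, one must track the only-locally-defined argument of $f$ through logarithms on annuli and through the holomorphic family $\{f^\lambda\}$, and verify that these winding estimates survive both the passage to a finite cover and the optimization over $s$. On the construction side, the delicate point is arranging that the iterated map realizes the limiting exponents at \emph{every} point of $E$ — not merely almost everywhere — while holding the Hausdorff dimension at the optimum $F_K(\alpha,\gamma)$.
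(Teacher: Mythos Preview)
This theorem is not proved in the present paper at all: it is quoted in the introduction as the main result of Astala--Iwaniec--Prause--Saksman \cite{AIPS}, and the paper only remarks in passing that ``the techniques used to prove this theorem mainly involved improved integrability estimates for complex powers of the derivatives of $f$,'' before moving on to its own contributions (Theorems \ref{thm:main_theorem2}, \ref{theorem:gaugedStretches}, \ref{theorem:gaugedRotations}, etc.). The sharpness direction is likewise attributed to \cite{AIPS} (for dimensions strictly below $F_K$) and to Hitruhin \cite{Hitruhin} (for positive $\mathcal{H}^{F_K}$-measure). So there is no ``paper's own proof'' to compare your proposal against.

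That said, your sketch is a reasonable high-level outline of the AIPS strategy and is consistent with the one-line description the paper gives: embedding $f$ in a holomorphic family, complex-exponent integrability/area-distortion with a rotation-sensitive phase, optimization over the complex exponent to extract the formula for $F_K$, and spiral-stretch Cantor constructions for the lower bound. Your caveats about the delicate points (tracking the argument through the holomorphic family with sharp constants, and getting the limiting exponents at \emph{every} point of the Cantor set rather than almost everywhere) are the right ones. If anything, note that the present paper's later sections (Lemmas \ref{lemma:generalApproximation}--\ref{lemma:generalApproximationRotation} and the constructions following \cite{UT08} and \cite{Hitruhin}) are precisely designed to handle the ``every point'' issue on the example side, so you could point to those as the mechanism rather than leaving it as an obstacle.
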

The techniques used to prove this theorem mainly involved improved integrability estimates for complex powers of the derivatives of $f$. There is very substantial overlap with the techniques used in studying area distortion, and as such it is a natural conjecture that the Hausdorff measure at the appropriate dimension should be finite, in analogy with Theorem 1.2 in \cite{ACTUTV}. However, we will show that this is not the case.

In the direction of lower bounds, that paper gives constructions to attain all dimensions below the bound $F_K(\alpha, \gamma)$. Hitruhin improved this in \cite{Hitruhin} to give examples of quasiconformal maps whose stretching and rotation sets have positive and finite Hausdorff measure at the critical dimension. That paper used a Cantor set construction from \cite{UT08} to prove this; the work gives a construction of a quasiconformal map whose distortion of a family of disks is completely understood.

%
%
In this work, we improve the above results beyond finite measure, showing that the stretching and rotation set can actually have positive measure with respect to many gauged Hausdorff measures which are much smaller than the typical $\mathcal{H}^d$. Our main theorem is
\begin{theorem}\label{thm:main_theorem2}
Let $\Lambda$ be a gauge function of the form $\Lambda(r) = r^d h(r)$ where $h$ is a nonnegative, nondecreasing function satisfying the growth condition $h(r) / h(s) \ge C_{\eps} (r/s)^{\eps}$ for all $\epsilon > 0$ and $0 < r \le s$ sufficiently small. Select parameters $\alpha < 1$ and $\gamma$ such that $d > 0$ is the maximum allowed Hausdorff dimension of the corresponding stretching and rotation set. Then there is a $K$-quasiconformal mapping $f$ and a set $E$ with $\mathcal{H}^{\Lambda}(E) > 0$ such that $E$ is the stretching and rotation set for $f$.
\end{theorem}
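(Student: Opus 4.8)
The plan is to produce $f$ through an explicit Cantor-type construction in the spirit of \cite{UT08} and \cite{Hitruhin}, but with \emph{generation-dependent} dissection data, calibrated so that a natural measure on the resulting set obeys a Frostman bound tuned to $\Lambda$. Starting from $D_0 = \disk$, at stage $k$ I would fix a family $\mathcal{D}_k$ of $M_k = m_1 \cdots m_k$ pairwise disjoint closed disks of radius $r_k$, each $D \in \mathcal{D}_k$ containing exactly $m_{k+1}$ members of $\mathcal{D}_{k+1}$ placed in an approximate square grid inside $D$, and set $E = \bigcap_k \bigcup_{D \in \mathcal{D}_k} D$. On the dissection annulus $D \setminus \bigcup\{D' \in \mathcal{D}_{k+1} : D' \subset D\}$ I place a primitive spiral--stretch map: after rescaling $D$ to $\disk$ and the children to disks of radius $\rho_{k+1} := r_{k+1}/r_k$, it is the identity on and outside $\partial\disk$, an orientation-preserving similarity of ratio $\rho_{k+1}^{\,\mathrm{Re}(s_{k+1}) - 1}$ and rotation $\mathrm{Im}(s_{k+1}) \log \rho_{k+1}$ on each child (for a complex exponent $s_{k+1}$ with $\mathrm{Re}(s_{k+1}) \in (0,1)$), and a logarithmic-spiral interpolation on the annular part. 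The Beltrami coefficient of such a primitive map has modulus $\bigl|\tfrac{s_{k+1} - 1}{s_{k+1} + 1}\bigr|$, which is $\le \tfrac{K-1}{K+1}$ precisely when $s_{k+1} \in \overline{B_K}$. Because the dissection annuli of distinct generations are disjoint and the intervening similarities are conformal, the infinite composition $f := \lim_k g_k \circ \cdots \circ g_1$ satisfies $\|\mu_f\|_\infty \le \tfrac{K-1}{K+1}$; combined with $r_k \to 0$ and $R_k := r_0 \prod_{j \le k} \rho_j^{\,\mathrm{Re}(s_j)} \to 0$, this gives uniform convergence on compacta to a $K$-quasiconformal $f$ that is conformal off $\bigcup_k \bigcup_{\mathcal{D}_k} D$.

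For $z \in E$, let $D_k(z) \in \mathcal{D}_k$ be the disk containing $z$; telescoping the similarities shows that $f(D_k(z))$ is comparable to a disk about $f(z)$ of radius $\asymp R_k$, and that the argument of $f(z+r) - f(z)$ along $[z+r, \infty)$, for $r \asymp r_k$, has accumulated to $\Theta_k := \sum_{j \le k} \mathrm{Im}(s_j) \log \rho_j + O(1)$. Choosing the scales $r_n$ in the definitions to be comparable to the radii of the $D_n(z)$, the stretching exponent at $z$ equals $\lim_n \bigl(\sum_{j \le n} \mathrm{Re}(s_j) \log \rho_j\bigr) \big/ \bigl(\sum_{j \le n} \log \rho_j\bigr)$ and the rotation exponent equals $\lim_n \Theta_n \big/ \bigl(\sum_{j \le n} \mathrm{Re}(s_j) \log \rho_j\bigr)$, so it suffices to arrange that these two $(-\log \rho_j)$-weighted averages converge to $\alpha$ and to $\gamma$. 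Moreover $E$ is \emph{exactly} the stretching and rotation set of $f$: at any $z \notin E$ the map $f$ is, near $z$, a finite composition of maps each bi-Lipschitz near the relevant point (the primitive maps being built from power maps with real part of the exponent bounded, hence bi-Lipschitz away from the child centers, which we place in gaps), hence bi-Lipschitz near $z$, so the only attainable stretching exponent there is $1 \ne \alpha$.

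Equip $E$ with the measure $\nu$ assigning mass $M_k^{-1}$ to each $D \in \mathcal{D}_k$. By the mass distribution principle, $\mathcal{H}^\Lambda(E) > 0$ follows once the parameters are chosen so that $\nu(B(x,r)) \le C\,\Lambda(r)$ for all $x$ and all small $r$. With the grid placement a child inside a generation-$k$ disk is separated from its siblings by $\asymp r_k m_{k+1}^{-1/2}$, so for $r$ from that value up to $r_k$ one has $\nu(B(x,r)) \asymp r^2 r_k^{-2} M_k^{-1}$, while below it $\nu(B(x,r)) \asymp M_{k+1}^{-1}$; hence the binding requirements are $M_k^{-1} \le c\, r_k^{\,d} h(r_k)$ for every $k$, together with $r^{2-d} r_k^{-2} M_k^{-1} \le C\, h(r)$ for $r_k m_{k+1}^{-1/2} \le r \le r_k$. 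The latter follows from the former and the hypothesis $h(r)/h(r_k) \ge C_{2-d}(r/r_k)^{2-d}$ (legitimate since $0 < 2 - d$, as $F_K(\alpha,\gamma) < 2$ when $\alpha < 1$), because then $r^{2-d} r_k^{-2} M_k^{-1} \le (r/r_k)^{2-d} h(r_k) \le C_{2-d}^{-1} h(r)$; monotonicity of $t \mapsto t^d h(t)$ handles the remaining scale ranges similarly. The former is compatible with the relation $\log M_k = d \log(1/r_k) + o(\log(1/r_k))$ forced by $\dim_{\mathcal{H}} E \le d$, because $h(r) \ge C_{\eps} r^{\eps}$ for every $\eps > 0$ makes the correction $\log(1/h(r_k))$ an $o(\log(1/r_k))$ term. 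Concretely I would run ``bulk'' generations with the homogeneous critical data of \cite{Hitruhin} (ratio $\rho_\ast$, branching $m_\ast$ with $\log m_\ast / \log(1/\rho_\ast) = d$, exponent $s_j = s_\ast := \alpha(1 + i\gamma) \in \overline{B_K}$, realizing $F_K(\alpha,\gamma)$) and insert, along a sparse sequence of generations $k_j \to \infty$, ``slow'' stages with a much smaller ratio $\rho_{k_j}$ and a moderately enlarged $m_{k_j}$ chosen so as to drop $M_{k_j}^{-1}$ below $r_{k_j}^{\,d} h(r_{k_j})$; sparseness keeps the total weight of the slow stages an $o\bigl(\sum_{j \le n} \log \rho_j\bigr)$ fraction, so the weighted averages above still tend to $\alpha$ and $\gamma$ and $\dim_{\mathcal{H}} E = d$, whence $\mathcal{H}^\Lambda(E) \ge \nu(E)/C > 0$.

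The main obstacle is exactly this simultaneous calibration: the quasiconformality ceiling $s_j \in \overline{B_K}$, the requirement that the $(-\log \rho_j)$-weighted stretching and rotation averages converge (fragile under wildly varying ratios), and the Frostman bound for $\Lambda$ all constrain the parameters against one another, and the last must be checked at \emph{every} scale --- notably at the ``fast'' scales just inside a parent disk, where the local dimension momentarily rises to $2$ and the growth hypothesis on $h$ (applied with exponent $2 - d$) becomes indispensable. Verifying that bound uniformly over the slow generations, where $\rho_{k_j}$ is tiny, is the delicate point, as is checking that the fully general (non-diagonal, $s_j$ varying) primitive map adapted from \cite{UT08} tolerates the generation-dependent data without degrading its quasiconformality constant.
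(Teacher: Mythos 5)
Your proposal and the paper both use a Cantor-type construction with spiral-stretch primitive maps, and both control the image Cantor set via Lemma \ref{lemma:generalApproximation}-type transfer, so the overall skeleton is the same. But the \emph{parameter calibration}, which is where the actual work lies, is genuinely different. The paper (following \cite{UT08}) allows infinitely many radii $R_{N,j}$ at each generation, packs the unit disk completely in area ($\sum_j m_{N,j}R_{N,j}^2 = 1$), and solves, path by path, the ``governing equation''
\[ R_{1,j_1}^2\cdots R_{N,j_N}^2 = (R_{1,j_1}\cdots R_{N,j_N})^d (\sigma_{1,j_1}\cdots\sigma_{N,j_N})^{Kd} \, h\bigl(R_{1,j_1}\cdots R_{N,j_N}\sigma_{1,j_1}^K\cdots\sigma_{N,j_N}^K\bigr) \]
for the free parameters $\sigma$. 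This makes $\sum_{\text{blocks at level }N}\Lambda(s_{j_1\ldots j_N}) = 1$ hold \emph{identically} for every $N$, so positivity then drops out of a Carleson-type packing lemma (Lemma \ref{lemma:carlesonEstimate}) without any tension between different scale regimes. Your proposal instead fixes a single ratio $\rho_k$ and branching $m_k$ per generation and then tries to make the natural mass distribution $\nu$ satisfy the Frostman bound $\nu(B(x,r)) \le C\Lambda(r)$ by interleaving ``bulk'' homogeneous stages at dimension $d$ with sparse ``slow'' stages of higher local branching. That is a legitimate alternative strategy, but it must run a balancing act that the governing equation renders unnecessary, and the balancing is the real content.

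Concretely, where your sketch is incomplete: the deficit $\log(1/h(r_k))$ that the slow stages must absorb is indeed $o(\log(1/r_k))$ thanks to the admissibility condition, but turning that into a genuinely \emph{sparse} (in the $\sum\log(1/\rho_j)$-weighted sense) sequence of slow stages requires applying the growth hypothesis with a parameter $\eps_j \to 0$ that degrades the constant $C_{\eps_j}$, while simultaneously spacing the slow stages so that (i) the accumulated deficit between consecutive slow stages never exceeds the surplus built at the last one, (ii) the total slow-stage weight stays a vanishing fraction of $\sum_{j\le n}\log(1/\rho_j)$ so the stretching and rotation averages are unaffected, and (iii) the packing constraint $m_{k_j}\rho_{k_j}^2 \lesssim 1$ and the scale-by-scale Frostman bound both hold \emph{uniformly} (with a single constant) across the slow stages, including the ``fast'' intermediate scales $r_{k_j}m_{k_j+1}^{-1/2}\le r\le r_{k_j}$ where the local dimension is $2$. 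You explicitly flag this calibration as the delicate point rather than carry it out; the paper avoids it entirely because the $\sigma$'s are chosen so that the governing equation forces a scale-exact $\Lambda$-mass balance, after which the only estimate needed is the growth condition with the single fixed exponent $2 - d$ inside the Carleson argument. A secondary, cosmetic difference: you place the tuned spiral exponent $s = \alpha(1+i\gamma)$ directly in the annular map, whereas the paper uses the extremal $1/\overline K$-stretch (with $\overline K < 1/\alpha$) and recovers the exponent $\alpha$ from the $\sigma^{\overline K}R \mapsto \sigma R$ scaling mismatch; both are fine, but the latter interacts more cleanly with the governing equation's form. In short: your approach is plausibly completable but has a real gap in the simultaneous-sparseness bookkeeping, and the paper's governing-equation device is precisely the idea that eliminates that gap.
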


We have a generalization to stretching exponents $\alpha > 1$ under an additional constraint on the gauge function $\Lambda$. Furthermore, as a corollary, there is an application to an interesting class of gauge functions:
\begin{corl}\label{corl:main_corl1}
There are positive measure stretching and rotation sets associated to the gauges $\Lambda(r) = r^d \left(\log \frac 1 r\right)^{-\beta}$ for every $\beta > 0$.
\end{corl}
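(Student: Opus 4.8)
\emph{Proof plan.} The strategy is simply to verify that the gauge $\Lambda(r) = r^d\left(\log\frac1r\right)^{-\beta}$ falls under the hypotheses of Theorem~\ref{thm:main_theorem2} and then invoke that theorem. Write $h(r) = \left(\log\frac1r\right)^{-\beta}$. First I would record the elementary properties: for $r \in (0,1)$ we have $\log\frac1r > 0$, so $h(r) > 0$, and $h'(r) = \frac{\beta}{r}\left(\log\frac1r\right)^{-\beta-1} > 0$, so $h$ is increasing, hence nondecreasing, on $(0,1)$. Since $h$ is only ever used at small scales, its blow-up as $r \to 1^-$ is irrelevant (one may truncate it if a globally nondecreasing function is desired).

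The one substantive point is the growth condition $h(r)/h(s) \ge C_{\eps}(r/s)^{\eps}$ for all $\eps > 0$ and $0 < r \le s$ small. For such $r,s$ put $a = \log\frac1s$ and $b = \log\frac1r$, so that $1 \le a \le b$ once $s \le 1/e$, and $r/s = e^{-(b-a)}$. Then $h(r)/h(s) = (a/b)^{\beta}$, and the desired bound with $C_{\eps} = 1$ reads $(a/b)^{\beta} \ge e^{-\eps(b-a)}$, i.e. $\eps(b-a) \ge \beta\log(b/a)$. Writing $t = b/a \ge 1$, the left side is $\eps a(t-1)$, so the claim becomes $g(t) := \eps a(t-1) - \beta\log t \ge 0$ on $[1,\infty)$. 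Since $g(1) = 0$ and $g'(t) = \eps a - \beta/t \ge \eps a - \beta$, we get $g' \ge 0$ on $[1,\infty)$ as soon as $a \ge \beta/\eps$, i.e. $s \le e^{-\beta/\eps}$. Hence the growth condition holds with $C_{\eps} = 1$ for all $0 < r \le s \le e^{-\beta/\eps}$, which is exactly the ``sufficiently small'' regime permitted by Theorem~\ref{thm:main_theorem2}. Thus $\Lambda$ is an admissible gauge.

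To conclude, fix any $K > 1$ and choose $\alpha < 1$ and $\gamma$ with $\alpha(1 + i\gamma) \in B_K$ for which $d := F_K(\alpha,\gamma) > 0$; for instance $\gamma = 0$ and $\alpha$ near $1$ gives $F_K(\alpha,0) = 1 + \alpha - \frac{K+1}{K-1}(1-\alpha) \to 2$ as $\alpha \to 1^-$, so $d \in (0,2)$ for a range of admissible $\alpha$. Applying Theorem~\ref{thm:main_theorem2} to this $\Lambda$, whose exponent $d$ is the critical Hausdorff dimension for these parameters, produces a $K$-quasiconformal map $f$ and a set $E$, the stretching and rotation set of $f$, with $\mathcal{H}^{\Lambda}(E) > 0$; as $\beta > 0$ was arbitrary, the corollary follows. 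There is no genuine obstacle here beyond the one-line logarithmic estimate of the previous paragraph; the only thing requiring care is that the growth bound is needed, and holds, only at small scales, which is precisely what the hypothesis of Theorem~\ref{thm:main_theorem2} asks for.
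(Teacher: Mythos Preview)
Your proposal is correct and follows the same overall strategy as the paper: verify that $h(r)=(\log\tfrac1r)^{-\beta}$ satisfies the admissibility growth condition and then invoke the main theorem. The only difference lies in how the growth inequality is checked. The paper works directly with the two-variable function
\[
g(r,s)=\frac{s^{\eps/\beta}\log s}{r^{\eps/\beta}\log r}
\]
on $\{0<r\le s\le \tfrac{1}{100}\}$, minimizes first in $r$ (locating the critical point $r=e^{-\beta/\eps}$), and then splits into the cases $s\ge e^{-\beta/\eps}$ and $s<e^{-\beta/\eps}$ to produce a uniform positive lower bound $C_{\eps,\beta}$. Your logarithmic change of variables $a=\log\tfrac1s$, $b=\log\tfrac1r$, $t=b/a$ collapses the problem to the single-variable inequality $\eps a(t-1)\ge\beta\log t$ for $t\ge1$, which is immediate once $a\ge\beta/\eps$ and yields the growth condition with constant $C_\eps=1$ on the range $s\le e^{-\beta/\eps}$. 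This is a genuinely cleaner argument than the paper's two-variable minimization, at the cost of an $\eps$-dependent threshold on $s$; since both the statement of Theorem~\ref{thm:main_theorem2} and the paper's own treatment of the corollary only require the estimate for small $s$, nothing is lost.
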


As an interesting second corollary, we can extend this to positive Riesz capacity $\dot{C}_{\beta, p}$ for all parameter choices $(\beta, p)$ with homogeneity matching the dimension $d$. In this case, we will be able to relate capacity results to gauge functions; this is also connected to the work in \cite{ACTUTV}.

The paper is organized as follows. In Section 2, we give a brief recollection of some notions involving quasiconformal mappings, and a more precise definition of the rotation. In Section 3, we analyze the Hausdorff dimension zero case; our main results here will be a construction of a quasiconformal mapping that stretches on any given countable set, as well as a first construction of a map with $\mathcal{H}^d$ non-$\sigma$-finite stretching and rotation set, where $d = F_K(\alpha, \gamma)$. In Section 4, we will prove the main theorem and indicate applications to particular gauges and Riesz capacities.

\section{Prerequisites}
%
%
Following \cite{AIPS}, given a quasiconformal map $f$, we will say that it \emph{stretches like $\alpha$} at a point $z_0$ if there exists a sequence of scales $r_n$ decreasing to zero for which
$$\lim_{n \to \infty} \frac{\log |f(z_0 + r_n) - f(z_0)|}{\log r_n} = \alpha.$$
Rotation is similar, but a little more subtle. For a principal quasiconformal map $f$, that is a map whose domain and codomain are both $\mathbb{C}$ and $f(z) = z + O\left(\frac 1 z\right)$ as $|z| \to \infty$, we can select a branch of $\log f$. We can find a corresponding choice of argument, and using this we can understand
$\operatorname{arg} (f(z_0 + r) - f(z_0))$
as the total rotation around the point $f(z_0)$ of the image of the ray $[z_0 + r, \infty)$ under $f$. Using this interpretation, we will say that $f$ \emph{rotates like $\gamma$} at a point $z_0$ if
$$\lim_{n \to \infty} \frac{\operatorname{arg}(f(z_0 + r_n) - f(z_0))}{\log |f(z_0 + r_n) - f(z_0)|} = \gamma$$
for a sequence $r_n \to 0$. It is worth noting that the stretch and rotation at a point are not uniquely defined; it is possible that a quasiconformal map stretches like $\alpha$ and $\alpha'$ at a point with $\alpha \ne \alpha'$ (or rotates with two different behaviors); this is due to the dependence on the particular choice of sequence $r_n$.

Given a quasiconformal mapping $f$, we set $E_f(\alpha, \gamma)$ to be its simultaneous rotation-like-$\gamma$ and stretching-like-$\alpha$ set; when it is clear from context, this will be abbreviated as $E_f$. Finally, we have the multifractal spectrum
$$F_K(\alpha, \gamma) = \sup \left\{\dim_{\mathcal{H}}(E_f(\alpha, \gamma)): f \text{ is $K$-quasiconformal}\right\}$$
where this $F_K(\alpha, \gamma)$ is that of Theorem \ref{thm:main_theorem1}, as proved in \cite{AIPS}.
%
%

\section{Dimension Zero}

%
%

There are two complementary senses in which we will improve upon results with positive measure. The first is to give particular examples of stretching and rotation sets with very large measure, perhaps uncountable or having positive measure with respect to some gauged Hausdorff measure. The second is to give a broader class of examples of sets, in particular including that every countable set can appear as a stretching set. Before the constructions, we will start with a useful lemma that will allow us to simplify some of the subsequent computations involving stretching. Although it was not stated as a separate result, the computation here is more or less contained in \cite{Hitruhin}.

%
%

\begin{lemma}\label{lemma:generalApproximation}
Suppose that $z$ is a point with the following property: there is a sequence of balls $B_n = B(z_n, r_n)$ such that $z \in B_n$ for each $n$, $r_n \to 0$, and 
$$\frac{\log|f(B_n)|}{\log |B_n|} = \alpha + \epsilon_n$$
with error $\epsilon_n \to 0$. Then $f$ stretches like $\alpha$ at $z$.
\end{lemma}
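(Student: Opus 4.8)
The plan is to translate the hypothesis about balls $B_n = B(z_n, r_n)$ containing $z$ into the required statement about the specific scales $|f(z+s_n)-f(z)|$ along rays from $z$. The natural choice is to take a sequence of scales $s_n$ comparable to $r_n$; since $z \in B_n$ and $r_n \to 0$, the whole ball $B_n$ shrinks to $z$, so $|B_n| = \pi r_n^2$ and the Euclidean diameter of $f(B_n)$ both control, up to quasiconformal distortion, the quantity $|f(z+s_n)-f(z)|$ for $s_n \asymp r_n$. Concretely, I would pick $s_n$ so that $z + s_n \in B_n$ (or at distance $\asymp r_n$ from $z$), noting that quasisymmetry of $f$ gives two-sided bounds $|f(z+s_n)-f(z)| \asymp \operatorname{diam} f(B_n)$ with constants depending only on $K$.

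The key estimates are then the following. By quasisymmetry (a standard consequence of quasiconformality, e.g.\ the quasiround image property), there are constants $c, C$ depending only on $K$ with
$$c \cdot \operatorname{diam} f(B_n) \le |f(z + s_n) - f(z)| \le C \cdot \operatorname{diam} f(B_n),$$
and moreover $|f(B_n)| \asymp (\operatorname{diam} f(B_n))^2$ again with $K$-dependent constants, since quasiconformal images of balls are "quasiballs" of bounded eccentricity. Combining these, $\log|f(z+s_n)-f(z)| = \tfrac12 \log|f(B_n)| + O(1)$ and $\log s_n = \tfrac12 \log|B_n| + O(1) = \log r_n + O(1)$. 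Dividing, and using that $\log r_n \to -\infty$ so that the bounded additive errors are negligible, we get
$$\frac{\log|f(z+s_n)-f(z)|}{\log s_n} = \frac{\tfrac12\log|f(B_n)| + O(1)}{\tfrac12\log|B_n| + O(1)} = \frac{\log|f(B_n)|}{\log|B_n|} \cdot \bigl(1 + o(1)\bigr) + o(1) = \alpha + \epsilon_n + o(1),$$
which tends to $\alpha$. Hence $f$ stretches like $\alpha$ at $z$ along the scales $s_n \to 0$.

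The main obstacle is making the geometric comparison between the ball $B_n$, its image $f(B_n)$, and the single radial increment $f(z+s_n)-f(z)$ fully rigorous: one must justify that $z$ lying somewhere in $B_n$ (not necessarily at the center) still forces a point $z + s_n$ with $|f(z+s_n)-f(z)|$ comparable to $\operatorname{diam} f(B_n)$. This uses the fact that quasiconformal maps are quasisymmetric, so the image of a ball is contained in and contains concentric balls of comparable radius, and the distortion constants are uniform in $n$ since they depend only on $K$. Once that comparison is in hand, the rest is the elementary observation that additive $O(1)$ errors wash out after dividing by $\log r_n \to -\infty$. I would also remark that this is essentially the computation implicit in \cite{Hitruhin}, as noted before the statement, so the write-up can be kept brief.
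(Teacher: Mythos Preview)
Your proposal is correct and follows essentially the same approach as the paper: both use quasisymmetry to show that $\log|f(z+r_n)-f(z)|$ and $\tfrac12\log|f(B_n)|$ differ by an $O_K(1)$ additive error, which becomes negligible after dividing by $\log r_n \to -\infty$. The only cosmetic difference is that the paper makes the quasisymmetry comparison explicit via a chain of intermediate points (an auxiliary $w \in \partial B_n$ equidistant from $z$ and $z_n$, then two ``rotations'' to pass from $|f(z+r_n)-f(z)|$ to $|f(z_n+r_n)-f(z_n)|$), whereas you invoke the quasiround image property more abstractly; the content is the same.
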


The utility of this lemma is that we can transfer stretching information at a central point not only to points at difference $r$ away, but to \emph{all} nearby points. As an idea of an application, it is frequently possible to get stretching at exponent $\alpha$ on an entire Cantor set just by taking a quasiconformal map that stretches like $\alpha$ at each of the points used at successive scales to generate the Cantor set. 

\begin{proof}
Fix $n$. We can rotate using quasisymmetry. Fix a point $w \in \partial \disk(z_n, r_n)$ that is equidistant with $z$ and $z_n$ (e.g. an intersection point of the perpendicular bisector of $\overline{z z_n}$ with the boundary of the circle). Then 
\begin{align*}
\log |f(z + r_n) - f(z)| &= \log|f(z + r_n e^{i \theta}) - f(z)| + C_K \\
&= \log |f(w) - f(z)| + C_K \\
&= \log |f(w) - f(w + |z - w| e^{i \nu})| + C_K \\
&= \log |f(w) - f(z_n)| + C_K' \\
&= \log |f(z_n + |w - z_n|) - f(z_n)| + C_K'' \\
&= \log |f(z_n + r_n) - f(z_n)| + C_K''
\end{align*}
given appropriate choices of $\nu$ and $\theta$; the constants $C_{K}, C_K'$ and $C_K''$ are unimportant except in that they are bounded in terms of $K$ only. Dividing by $\log r_n$ and letting $n \to \infty$, we find that
\begin{align*}
\frac{\log|f(z + r_n) - f(z)|}{\log r_n} &= \frac{\log|f(z_n + r_n)| + C_K''}{\log r_n} \\
&= \frac{\frac 1 2 \log |f(B_n)| + C_K'''}{\frac 1 2 \log |B_n| - \frac 1 2 \log \pi} \\
&= \alpha + \epsilon_n + \frac{2 C_K'''}{\log|B_n|} + o(1)
\end{align*}
following a final application of quasisymmetry. The result follows.
\end{proof}

Note that we can replace the measures of the balls with their radii. We can actually extract a little more information: if $C$ is a fixed constant, and $z$ is a point for which $|z - z_n| \le Cr_n$, the same result holds. To see this, notice that there is a polygonal path connecting $z$ to $z_n$ where each segment has length $r_n$, and the number of segments is uniformly bounded by a constant only involving $C$. Repeating the double-rotation idea of the proof, we now lose a constant several times (but a uniformly bounded number), which does not impact the result. 

Moreover, the same result holds for rotations:
\begin{lemma}\label{lemma:generalApproximationRotation}
Suppose that $z$ is a point with the following property: there is a sequence of balls $B_n = B(z_n, r_n)$ such that $z \in B_n$ for each $n$, $r_n \to 0$, and 
$$\frac{\operatorname{arg}(f(z_n + r_n) - f(z))}{\log |f(z_n + r_n) - f(z_n)|} = \gamma + \epsilon_n$$
with error $\epsilon_n \to 0$. Then $f$ rotates like $\gamma$ at $z$.
\end{lemma}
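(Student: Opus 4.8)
The plan is to mimic the proof of Lemma \ref{lemma:generalApproximation}, transferring the rotation data from the center $z_n$ to the nearby point $z$ by a double-rotation argument, but now tracking the \emph{argument} of $f(z+r_n) - f(z)$ rather than its modulus. The key point is that quasisymmetry controls not only ratios of distances but also angular distortion: if $f$ is $K$-quasiconformal then the total rotation of $f$ along a short path changes by at most a bounded additive constant $C_K$ when we move the base point a comparable distance. Concretely, first I would pick, as in the previous lemma, a boundary point $w \in \partial \disk(z_n, r_n)$ equidistant from $z$ and $z_n$, and express $\operatorname{arg}(f(z + r_n) - f(z))$ as $\operatorname{arg}(f(w) - f(z))$ up to a bounded error, then as $\operatorname{arg}(f(w) - f(z_n))$ up to another bounded error, and finally as $\operatorname{arg}(f(z_n + r_n) - f(z_n)) + C_K$. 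Each replacement costs at most an additive constant depending only on $K$, because the relevant rays differ by a rotation through a bounded angle at a point whose image distortion is controlled.

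Next I would combine the numerator estimate with the denominator estimate already established in Lemma \ref{lemma:generalApproximation}, namely that $\log|f(z+r_n)-f(z)| = \log|f(z_n+r_n)-f(z_n)| + O(1)$ and that this quantity is comparable to $\tfrac12\log|B_n| \to -\infty$. Dividing, we get
\begin{align*}
\frac{\operatorname{arg}(f(z + r_n) - f(z))}{\log |f(z + r_n) - f(z)|} &= \frac{\operatorname{arg}(f(z_n + r_n) - f(z_n)) + O(1)}{\log |f(z_n + r_n) - f(z_n)| + O(1)} \\
&= \left(\gamma + \epsilon_n\right) \cdot \frac{\log |f(z_n+r_n)-f(z_n)|}{\log|f(z_n+r_n)-f(z_n)| + O(1)} + \frac{O(1)}{\log|f(z_n+r_n)-f(z_n)| + O(1)} \\
&= \gamma + \epsilon_n + o(1),
\end{align*}
since the bounded error terms are divided by a quantity tending to $-\infty$. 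Letting $n \to \infty$ then gives that $f$ rotates like $\gamma$ at $z$, along the sequence $r_n \to 0$.

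One bookkeeping subtlety worth addressing is that the statement of the lemma, as written, has $f(z)$ (rather than $f(z_n)$) inside the argument in the numerator hypothesis, so the quantity $\operatorname{arg}(f(z_n + r_n) - f(z))$ already measures rotation around the target point $f(z)$; I would either note that the same double-rotation argument converts between rotation around $f(z)$ and rotation around $f(z_n)$ up to $C_K$, or simply observe that the hypothesis can be restated with $f(z_n)$ at the cost of a bounded constant absorbed into $\epsilon_n$. As in the remark following the previous lemma, the argument also extends to points $z$ with $|z - z_n| \le C r_n$ by chaining a uniformly bounded number of these single-segment rotations, each contributing a bounded additive error.

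The main obstacle is making precise the claim that moving the base point of the ray a comparable distance changes the total angular change by only a bounded additive constant. This requires recalling that the rotation is defined (following \cite{AIPS}) as the winding of the image of a ray under the principal branch of $\log f$, and verifying that quasisymmetry — which a priori is a statement about moduli — indeed implies an additive bound on this winding; this is precisely the kind of estimate implicit in \cite{Hitruhin}, and the cleanest route is to invoke the angular distortion bounds for quasiconformal maps there rather than re-deriving them. Once that additive control is in hand, the rest is the same division-by-a-divergent-logarithm argument as in Lemma \ref{lemma:generalApproximation}.
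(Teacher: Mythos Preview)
Your proposal is correct and follows essentially the same approach as the paper: both argue that moving the base point from $z_n$ to $z$ changes the accumulated argument by only an additive $O(1)$ error, combine this with the $O(1)$ denominator control already obtained in Lemma~\ref{lemma:generalApproximation}, and then divide by a quantity tending to $-\infty$. The paper's sketch differs only in how the reduction is phrased---it rotates so that $z$ lies on the ray $[z_n,\infty)$, whereas you route through the equidistant boundary point $w$ exactly as in the stretching lemma---but the underlying mechanism and the identification of the key obstacle (bounded angular distortion for quasiconformal maps) are the same.
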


\begin{proof}
We will only give a brief description of the technique of the proof, as it is rather similar to the previous one. Fix $n$, and consider the rays $[z_n, \infty)$ and $[z, \infty)$ parallel to the positive $x$-axis. By a rotation, which changes the cumulate argument by an $O(1)$ factor, we may assume that $z$ lies on the ray $[z_n, \infty)$. Now reusing the double rotation argument of the previous lemma, the denominators of the rotation are the same up to an $O(1)$ error, which is enough.
\end{proof}

Our first result will be a large dimension zero set which has the most extreme stretching and rotation allowed by the multifractal spectrum bounds of \cite{AIPS}. The construction will be a sort of Cantor set built from disks, within which we can explicitly keep track of the stretching and rotation.

%
%

\begin{theorem} \label{theorem:uncountableCantorSet}
For any pair $(\alpha, \gamma)$ for which $z |z|^{\alpha(1 + i \gamma) - 1}$ is $K$-quasiconformal, there is a $K$-quasiconformal map $f$ and an uncountable set $E_f$ for which $f$ stretches like $\alpha$ and rotates like $\gamma$ at every point in $E_f$.
\end{theorem}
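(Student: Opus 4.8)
The plan is to build $f$ as an infinite composition of quasiconformal maps, each of which is a rescaled copy of the radial stretch $g(z) = z|z|^{\alpha(1+i\gamma)-1}$ (which is $K$-quasiconformal by hypothesis), localized to a shrinking family of disjoint disks. Concretely, fix a base disk $\disk(0,1)$, and inside it choose two (or more) disjoint closed subdisks $\disk(c_1, \rho)$ and $\disk(c_2, \rho)$ of equal radius $\rho < 1/2$. Let $f_1$ be a $K$-quasiconformal map of $\C$ that, on a neighborhood of each $c_j$, agrees with an affine rescaling of $g$ centered at $c_j$ — so that $f_1$ stretches like $\alpha$ and rotates like $\gamma$ at $c_1$ and $c_2$ — while being conformal (indeed the identity up to the normalization $f(z) = z + O(1/z)$) outside $\disk(0,1)$ and smooth in between. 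Then repeat: inside each image disk $f_1(\disk(c_j,\rho))$ place two further disjoint subdisks, and let $f_2$ be a $K$-quasiconformal map supported in the union of those image disks, equal to a rescaled $g$ near the new centers. Continue inductively; the set $E_f$ is the "Cantor set" of points that, at every stage, lie deep inside one of the chosen subdisks, pulled all the way back by $f_n^{-1}\circ\cdots\circ f_1^{-1}$.

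The key steps, in order, are: (1) a single-scale lemma that a rescaled copy of $g$ near a point $c$ gives $\log|f(B)|/\log|B| = \alpha + o(1)$ for the balls $B$ centered at $c$, and similarly controls the cumulative argument — this is essentially the model computation for $g$ itself; (2) checking that composing with maps supported on disjoint disks at later stages does not disturb the stretching/rotation already arranged at earlier centers, because later maps are conformal (a similarity) on each such disk, hence only shift scales by a bounded multiplicative factor and add a bounded amount of argument; (3) applying Lemma \ref{lemma:generalApproximation} and Lemma \ref{lemma:generalApproximationRotation} to transfer the exponents from the centers to every point of $E_f$, since each point of $E_f$ lies in a nested sequence of balls $B_n$ (the stage-$n$ subdisk containing it, or its pullback) with $z \in B_n$, $r_n \to 0$, and $\log|f(B_n)|/\log|B_n| = \alpha + \epsilon_n$; (4) verifying $E_f$ is uncountable — it is the image under the limiting homeomorphism of a standard middle-thirds-type Cantor set, so it has the cardinality of the continuum; (5) checking that the compositions converge to a genuine $K$-quasiconformal map: since each $f_n$ is $K$-quasiconformal and the supports shrink, one controls convergence via the standard normal-families / Beltrami-coefficient argument (the limiting $\mu$ still has $\|\mu\|_\infty \le (K-1)/(K+1)$ because at a.e. point only one of the $f_n$ is non-conformal near it).

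The main obstacle I expect is step (2) together with the bookkeeping in (3): I must arrange the scales $\rho_n$ at which subdisks are nested so that at a point $z \in E_f$, the stage-$n$ ball $B_n$ really does satisfy $\log|f(B_n)|/\log|B_n| \to \alpha$, not merely $\limsup$ or $\liminf$. This requires that the distortion contributed by stages $> n$ to the image of $B_n$ be uniformly bounded (a similarity on $B_n$, so exactly a bounded factor), that the distortion from stages $< n$ be absorbed into the center-of-$B_n$ normalization, and that the "inner radius" at which we place the next generation of subdisks is not too small relative to $\rho_n$ — otherwise the error terms $\epsilon_n$ from composing many rescaled $g$'s could fail to vanish. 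Controlling the cumulative argument across infinitely many stages is the analogous subtlety for rotation: each stage after the $n$-th contributes only an $O(1)$ rotation to points inside a stage-$n$ disk (since it acts by a similarity there), and there is a bounded number of "outer" stages whose rotation must be tracked, so dividing by $\log|f(B_n)| \to -\infty$ kills all of it except the $\gamma \log|f(B_n)|$ main term coming from the rescaled $g$ at the center of $B_n$. Once the scale parameters $\rho_n$ are chosen (say $\rho_{n+1} = \rho_n^{k_n}$ for a suitably slowly growing sequence $k_n$), the rest is a routine application of the two approximation lemmas.
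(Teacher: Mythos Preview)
Your overall strategy --- a nested Cantor-type construction with rescaled copies of the model map $g(z)=z|z|^{\alpha(1+i\gamma)-1}$ at each stage, followed by an appeal to Lemmas~\ref{lemma:generalApproximation} and~\ref{lemma:generalApproximationRotation} to pass from centers to all limit points --- is exactly the paper's approach, and your scale choice $\rho_{n+1}=\rho_n^{k_n}$ matches the paper's $\tilde r_k = r_k^{k^2}$.

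There is, however, a genuine gap in step~(5). Your claim that ``at a.e.\ point only one of the $f_n$ is non-conformal near it'' is false as you have set things up. You take $f_1$ to agree with a rescaled $g$ on a \emph{full} neighborhood of each $c_j$, so $f_1$ has dilatation exactly $K$ throughout $\disk(c_j,\rho)\setminus\{c_j\}$. The stage-2 disks sit inside the image $f_1(\disk(c_j,\rho))$, and $f_2$ again has dilatation $K$ on those full disks. Hence on the $f_1$-preimage of any stage-2 disk both $f_1$ and $f_2$ contribute nontrivial dilatation, and by induction a point that survives to stage $N$ sees $N$ non-conformal factors, giving dilatation up to $K^N$. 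Since $N$ is unbounded near $E_f$, the limit is not $K$-quasiconformal (indeed not quasiconformal at all).

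The repair --- and this is precisely what the paper does --- is to make each stage non-conformal only on an \emph{annulus}, with a conformal core. Inside each stage-$n$ ball $B_{n,i}$ of radius $r_n$ place a concentric ball $A_{n,i}$ of much smaller radius $\tilde r_n$; put the radial stretch $z|z/r_n|^{\alpha(1+i\gamma)-1}$ on the annulus $B_{n,i}\setminus A_{n,i}$ and a \emph{similarity} on $A_{n,i}$; then place the stage-$(n+1)$ balls inside $A_{n,i}$. The annuli of non-conformality are now pairwise disjoint, so the limiting Beltrami coefficient genuinely satisfies $\|\mu\|_\infty\le (K-1)/(K+1)$. Note the consequence: with this modification no individual stage stretches like $\alpha$ at its center (it is conformal there), so your step~(2) has to be reworked. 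The exponent $\alpha$ emerges only from the accumulated ratio $\log|f(A_{n,i})|/\log|A_{n,i}|$ across the stack of annuli, and it is exactly here that the choice $\tilde r_k=r_k^{k^2}$ is needed to kill the error coming from the multiplicative constant piled up by the first $n-1$ similarities.
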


\begin{proof} Start with $B_{0, 1} = \mathbb{D}$ and $f(z) = z$ on all of $\mathbb{C}$. Now assume that $B_{n, i}$ has been defined and has radius $r_n$, and that there are complex numbers $\beta_{n,i}, w_{n, i}$ for which $f(z) = \beta_{n,i} z + w_{n, i}$ in a neighborhood of $B_{n, i}$. Choose a number $\tilde{r}_n$ (which will be substantially smaller than $r_n$); take a concentric ball $A_{n, i}$ within $B_{n, i}$ of radius $\tilde{r}_n$, and place two disjoint balls $B_{n + 1, j}$ within $A_{n, i}$ each with radius $\frac 1 4 \tilde{r}_n$. We now modify the construction of $f$; without loss of generality, we may assume that $w_{n, i} = 0$ and $f(w_{n_i}) = 0$ - otherwise, pre- and post-compose with an appropriate translation (this only simplifies the notation). Now modify the definition of $f$ to become
$$
f(z) = \left\{\begin{array}{ll} 
	\beta_{n,i} z & \text{ near } B_{n, i}\text{ but in } B_{n, i}^c \\
	\beta_{n,i}  z \left|\frac{z}{r_n}\right|^{\alpha(1 + i \gamma) - 1} & z \in B_{n, i} \setminus A_{n, i} \\
	\beta_{n,i} \left(\frac{\tilde{r}_n}{r_n}\right)^{\alpha - 1} e^{i \theta} z & z \in A_{n, i}
\end{array}
\right.
$$
where $e^{i \theta}$ is chosen so that $f$ is continuous across $\partial A_{n, i}$, and
$$\beta_{n + 1, j} = \beta_{n, i} \left(\frac{\tilde{r}_n}{r_n}\right)^{\alpha - 1} e^{i \theta}.$$

Note that the original function $f$ is injective; on the other hand, the construction only carries out a local modification by stretching and rotating the ball $A_{n, i}$, and remains injective. Moreover, the limiting function of the construction is $K$-quasiconformal as long as the parameters $(\alpha, \gamma)$ are chosen to allow this. In particular, following \cite{Hitruhin}, we can choose $\alpha, \gamma$ to be any pair for which $F_K(\alpha, \gamma) = 0$. 

We just need to compute the change in argument induced by crossing the annulus between $B_{n, i}$ and $A_{n, i}$, find the corresponding stretching on scale $\tilde{r}_n$ with respect to the center point, and choose the sequence of radii carefully. Since
$$\left|\frac z r\right|^{\alpha(1 + i \gamma)} = \left|\frac{z}{r}\right|^{\alpha} e^{i \alpha \gamma \log |z/r|}$$
it is immediate that the change in argument across the annulus is $\alpha \gamma \log \frac{\tilde{r}_n}{r_n} + O(1)$. The numerator of the stretching with respect to the center point of $B_{n, i}$ on scale $\tilde{r}_n$ is 
$$\log \left|\beta_{n, i} \left(\frac{\tilde{r}_n}{r_n}\right)^{\alpha - 1} e^{i \theta} \tilde{r}_n \right| = \alpha \log \tilde{r}_n + \log |\beta_{n, i}| - (\alpha - 1) \log r_n.$$
As a consequence, we see that the overall stretching of $f$ with respect to the center point is
\begin{equation}\label{eq:dim_zero_stretch}
\frac{\log |f(\tilde{r}_n) - f(0)|}{\log \tilde{r}_n} = \alpha + \frac{\log |\beta_{n, i}|}{\log \tilde{r}_n} - (\alpha - 1) \frac{\log r_n}{\log \tilde{r}_n}
\end{equation}
while the overall rotation is
\begin{equation}\label{eq:dim_zero_rotation}
\frac{\operatorname{arg} \left(f(\tilde{r}_n) - f(0)\right)}{\log |f(\tilde{r}_n) - f(0)|} = \frac{\alpha \gamma \log \tilde{r}_n - \alpha \gamma \log r_n + O(1)}{\alpha \log \tilde{r}_n + \log |\beta_{n, i}| - (\alpha - 1) \log r_n}.
\end{equation}
Each $\beta_{n, i}$ has the same modulus $\beta_n$; the only potential difference is the exact rotation. We can easily compute this number from its definition, finding that
$$\beta_n = \left[\prod_{k = 0}^{n - 1} \frac{\tilde{r}_k}{r_k}\right]^{\alpha - 1}$$
As a consequence, we have that
\begin{align*}
\frac{\log |\beta_{n, i}|}{\log \tilde{r}_n} &= (\alpha - 1) \sum_{k = 0}^{n - 1} \frac{\log \tilde{r}_k - \log r_k}{\log \tilde{r}_n}
\end{align*}
Because $\tilde{r}_k < r_k < 1$, we can estimate all the terms roughly by the final term (provided that $\tilde{r}_k/r_k$ is decreasing, which it will be), finding
\begin{equation}\label{eq:dim_zero_error}
\left|\frac{\log |\beta_{n, i}|}{\log \tilde{r}_n}\right| \le 2(1 - \alpha) n \frac{\log \tilde{r}_{n - 1}}{\log \tilde{r}_n}
\end{equation}
We have already defined $r_{k + 1} = \frac 1 4 \tilde{r}_k$, and now we make the selection that
$$\tilde{r}_k = r_k^{k^2}$$
and the above error estimate (\ref{eq:dim_zero_error}) tends to zero. As an immediate consequence of this selection, we have that the stretching tends towards $\alpha$, while the rotation tends towards $\gamma$. This completes the proof.
\end{proof}

%
%

Now we will go in the other direction, finding that any countable set is a stretching set with the worst possible exponent. As a nice application, this shows that an interesting multifractal spectrum bound in the style of \cite{AIPS} is not possible for Minkowski dimension; see, e.g. Chapter 5 of \cite{Mattila} for constructions of countable sets with large Minkowski dimension. There are countable sets whose lower Minkowski dimension is arbitrarily close to $2$, and these can exhibit stretching of exponent $1/K$ at every point. The key idea here will be that sums of radial stretches are quasiconformal maps; in general, it is quite rare for a sum of quasiconformal maps to be quasiconformal (let alone injective). This idea will not work for rotations. 

Note, however, that this contrasts starkly with the possibilities in other dimensions. For example, a one dimensional set containing a smooth curve or a segment can never be a stretching set for an exponent other than $1$. To see this, consider the fact that if $f$ stretches with exponent $\alpha > 1$ at every point within a line segment, $f$ is flat at every point within that line. Explicitly, if $f$ is viewed as a single-variable function on this line, it is (classically) differentiable with derivative zero at every point, hence non-injective. Considering $f^{-1}$ shows why $f$ cannot stretch with exponent $\alpha < 1$.

\begin{theorem}\label{theorem:countableSets}
Given a countable set $\Lambda \subseteq \mathbb{D}$, there is a $K$-quasiconformal mapping $f$ such that for each $\lambda \in \Lambda$ there is a sequence of scales $r_m$ decreasing to zero for which
$$\lim_{m \to \infty} \frac{\log|f(\lambda + r_m) - f(\lambda)|}{\log r_m} = \frac 1 K.$$
\end{theorem}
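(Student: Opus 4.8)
The plan is to build $f$ as an infinite sum (or carefully ordered infinite composition) of radial stretch maps, one bump centered at each point $\lambda$ of the countable set $\Lambda$. Enumerate $\Lambda = \{\lambda_1, \lambda_2, \dots\}$. For each $j$ we want a model map that, near $\lambda_j$, behaves like $z \mapsto \lambda_j + (z - \lambda_j)|z - \lambda_j|^{1/K - 1}$ on a tiny disk $B(\lambda_j, \rho_j)$, is the identity outside a slightly larger disk, and interpolates in the annulus between. The key structural observation flagged in the text is that sums of radial stretches can remain quasiconformal and injective, which is what makes this work where the analogous rotation statement fails; so I would write $f = \mathrm{id} + \sum_j \phi_j$ where $\phi_j$ is the (compactly supported) difference between the $j$-th model stretch and the identity, and choose the supports $\mathrm{supp}\,\phi_j \subset B(\lambda_j, R_j)$ to shrink fast enough — $R_j \to 0$ and, more importantly, chosen relative to the geometry of $\Lambda$ — so that the supports are pairwise disjoint away from limit points of $\Lambda$, and so that the tail sums are small in $W^{1,2}_{\mathrm{loc}}$ and in $L^\infty$ of the distortion.

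The key steps, in order: (1) fix the single-bump building block, compute its Beltrami coefficient, and check that its distortion is bounded by $K$ with the worst exponent realized exactly on the inner disk — this is where $1/K$ (rather than some $\alpha$ strictly between $1/K$ and $1$) is forced, since pushing a radial stretch all the way to distortion exactly $K$ gives exponent exactly $1/K$; (2) choose the radii $R_j$ and inner radii $\rho_j$ recursively: given $\lambda_1, \dots, \lambda_{j}$ already handled with supports $B(\lambda_i, R_i)$, and given $\lambda_{j+1}$, pick $R_{j+1}$ small enough that $B(\lambda_{j+1}, R_{j+1})$ is disjoint from $\overline{B(\lambda_i, R_i)}$ for all $i \le j$ with $\lambda_i \ne \lambda_{j+1}$ — which is possible since there are finitely many such $i$ and $\lambda_{j+1}$ is not among those centers — and also small enough that $R_{j+1} < 2^{-j}$ and that the $W^{1,2}$ norm of $\phi_{j+1}$ is at most $2^{-j}$; (3) verify that $f = \mathrm{id} + \sum_j \phi_j$ is a well-defined $W^{1,2}_{\mathrm{loc}}$ map, that its Beltrami coefficient is $\mu_f = \sum_j \mu_{\phi_j}$ supported on the disjoint inner annuli with $\|\mu_f\|_\infty \le \frac{K-1}{K+1}$ (disjointness of supports is what keeps the sup norm from accumulating), and that $f$ is an injective homeomorphism — locally it is just one bump map composed with the identity, so injectivity is local and then global by a standard degree/properness argument; (4) conclude: for each fixed $\lambda_j$, on the disk $B(\lambda_j, \rho_j)$ the map $f$ is exactly $z \mapsto \lambda_j + (z-\lambda_j)|z-\lambda_j|^{1/K-1}$ up to an affine factor, so choosing $r_m \searrow 0$ inside $(0, \rho_j)$ gives $\log|f(\lambda_j + r_m) - f(\lambda_j)| / \log r_m \to 1/K$ directly — or, more cleanly, invoke Lemma \ref{lemma:generalApproximation} with the balls $B_m = B(\lambda_j, r_m)$ since $|f(B_m)| \asymp r_m^{2/K}$.

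The main obstacle I expect is step (3): controlling the Beltrami coefficient and, above all, injectivity of the infinite sum near the limit set $\Lambda' = \overline{\Lambda} \setminus \Lambda$ of accumulation points. Away from $\Lambda'$ everything is locally finite — only finitely many $\phi_j$ are nonzero on a neighborhood of any given point not in $\Lambda'$ — so the map is locally a single quasiconformal bump and causes no trouble. At a point $z_0 \in \Lambda'$, however, infinitely many bumps can pile up in every neighborhood, and one must argue that $f$ is still continuous, open, and injective there; the saving grace is that the bumps with centers near $z_0$ have supports $B(\lambda_j, R_j)$ with $R_j \to 0$ forced to be small compared to $|\lambda_j - z_0|$ by the disjointness requirement in step (2), so $f(z) \to z_0$ as $z \to z_0$ and $f$ extends continuously; injectivity then follows since distinct points are either separated by a bump-free region or lie in a common small bump. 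A secondary, more routine obstacle is verifying $f \in W^{1,2}_{\mathrm{loc}}$ across $\Lambda'$, but this is handled by the summability $\sum_j \|\nabla \phi_j\|_{L^2} < \infty$ arranged in step (2). I would also remark (as the paper does) that the identical scheme cannot deliver rotation, because rotation bumps do not sum to injective maps — the argument genuinely uses that a radial stretch is, in the relevant sense, the gradient of a radial potential.
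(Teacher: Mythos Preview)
Your disjoint-support scheme breaks down as soon as $\Lambda$ contains one of its own accumulation points. Take $\Lambda = \{0\} \cup \{1/n : n \ge 2\}$ and enumerate with $\lambda_1 = 0$: whatever radius $R_1 > 0$ you assign, infinitely many later points $\lambda_{j+1} = 1/n$ lie inside $B(0, R_1)$, and then no positive $R_{j+1}$ can make $B(\lambda_{j+1}, R_{j+1})$ disjoint from it. Your justification in step (2) --- ``possible since there are finitely many such $i$ and $\lambda_{j+1}$ is not among those centers'' --- only gives $|\lambda_{j+1} - \lambda_i| > 0$, not $|\lambda_{j+1} - \lambda_i| > R_i$, which is what disjointness actually requires. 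This is not an issue you can push off to $\Lambda' = \overline{\Lambda} \setminus \Lambda$: the obstruction occurs at points of $\Lambda$ itself, and no re-enumeration helps, since the accumulation point must appear at some finite index. Once disjointness fails, the claim in step (3) that $\mu_f$ is supported on disjoint pieces collapses (the Beltrami coefficient of a sum is \emph{not} the sum of the Beltrami coefficients), and the claim in step (4) that $f$ equals a single stretch on $B(\lambda_j, \rho_j)$ is false as well.

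The paper's argument uses the same sum $f = \sum_n 2^{-n} f_{\lambda_n}$ but makes no attempt at disjointness: every $f_{\lambda_n}$ is supported on the full unit disk $\lambda_n + \disk$, and all of these overlap. The mechanism you are missing is that $\dz f_{\lambda_n}(z) = \frac{K+1}{2K} |z - \lambda_n|^{1/K - 1}$ is \emph{real and positive}, so
\[
\Bigl|\dzbar \sum_n 2^{-n} f_{\lambda_n}\Bigr| \le \sum_n 2^{-n} |\mu_{\lambda_n}|\, \dz f_{\lambda_n} \le \frac{K-1}{K+1}\, \dz \sum_n 2^{-n} f_{\lambda_n},
\]
which gives the Beltrami bound directly on the overlapping sum; $K$-quasiconformality (and in particular injectivity) then follows from the measurable Riemann mapping theorem, not from a hands-on geometric argument. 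The price is that step (4) becomes genuinely nontrivial: since infinitely many bumps contribute near every $\lambda_j$, one must prove $|f(\lambda_j + r) - f(\lambda_j)| = c\, r^{1/K} + o(r^{1/K})$ by estimating the terms $m \ne j$ via a near/far split --- ``far'' terms with $|\lambda_m| \gg r$ handled by the linear bound $|(1+z)|1+z|^{1/K-1} - 1| \lesssim |z|$, and ``near'' terms with $|\lambda_m| \lesssim r$ forced to have large index $m$ so that the weights $2^{-m}$ absorb them.
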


Recall that $1/K$ is the most extreme possible exponent due to \cite{AIPS}.

\begin{proof}
Let us begin with the radial stretches 
$$f_{\lambda}(z) = (z - \lambda) |z - \lambda|^{\frac 1 K - 1} + \lambda$$
when $|z - \lambda| \le 1$, and the identity otherwise. These are $K$-quasiconformal mappings that satisfy a Beltrami equation with coefficient $\mu_{\lambda_n}$. Moreover, their derivatives $\dz f_{\lambda}$ have constant sign where they are defined. To wit,
$$\dz f_{\lambda} = \left(\frac 1 {2K} + \frac 1 2\right) |z - \lambda|^{\frac 1 K - 1}$$
within the disk $\lambda + \mathbb{D}$, and $1$ outside. It follows that if we sum such solutions, we can still have a solution to a Beltrami equation; in particular, assuming that derivatives and sums commute in this context, we have
\begin{align*}
\left|\dzbar \sum_{n = 1}^{\infty} \frac 1 {2^n} f_{\lambda_n}(z) \right| &= \left|\sum_{n = 1}^{\infty} \frac 1 {2^n} \dzbar f_{\lambda_n}(z)\right| \\
&= \left|\sum_{n = 1}^{\infty} \frac 1 {2^n} \mu_{\lambda_n}(z) \dz f_{\lambda_n}(z) \right| \\
&\le \sum_{n = 1}^{\infty} \frac 1 {2^n} \|\mu_{\lambda_n}\|_{\infty} |\dz f_{\lambda_n}(z)| \\
&= \frac{K - 1}{K + 1} \sum_{n = 1}^{\infty} \frac 1 {2^n} \dz f_{\lambda_n}(z) \\
&= \frac{K - 1}{K + 1} \dz \sum_{n = 1}^{\infty} \frac 1 {2^n} f_{\lambda_n}(z)
\end{align*}
Now given a countable set, we can therefore define a function 
\begin{equation}\label{eq:qc_sums}
f(z) = \sum_{n = 1}^{\infty} \frac 1 {2^n} f_{\lambda_n}(z).
\end{equation}
Modulo swapping the derivatives and the sum, we have shown that $f$ satisfies a Beltrami equation with coefficient bounded by $(K - 1) / (K + 1)$. This condition will follow very quickly from the dominated convergence theorem. Fix a test function $\varphi \in C^{\infty}_0(\mathbb{C})$ and integrate by parts:
\begin{align*}
\int f \partial_x \varphi &= \int \lim_{n \to \infty} \sumN \frac 1 {2^n} f_{\lambda_n} \partial_x \varphi \\
&= \lim_{n \to \infty} \sumN \int \frac{1}{2^n} f_{\lambda n} \partial x \varphi
\end{align*}
where we have used the fact that $|f(z)| \le \sum_n \frac{1}{2^n} |f_{\lambda_n}(z)| \le \sum_n \frac{1}{2^n} (|\lambda| + 1 + |z|) \le 2 + |z|$ from the estimate $|f_{\lambda}(z)| \le |z - \lambda|^{1/K} + |\lambda| \le 2$ within the disk $\lambda + \mathbb{D}$, and $|z|$ otherwise. Thus $f$ is bounded on the support of $\varphi$, and the above follows. Now integrate by parts in each summand to get
\begin{align*}
\int f \partial_x \varphi &= - \lim_{n \to \infty} \int \sumN \frac{1}{2^n} \partial_x f_{\lambda_n} \varphi
\end{align*}
Now $\varphi$ is bounded on its support, and $|\partial_x f_{\lambda_n}| \lesssim_K |z - \lambda_n|^{1/K - 1}$ is locally integrable (as $1/K - 1 > -1$), and summing in $n$ does not change this. Taking $\sum_{n = 1}^{\infty} \frac 1 {2^n} |z - \lambda_n|^{1/K - 1} |\varphi|$ as our dominating function, we again interchange the limits and find that
$$\int f \partial_x \varphi = - \int \left(\suminf \partial_x f_{\lambda_n}\right) \varphi$$
as desired. Now we have that $f$ has a weak derivative, which is a convergent sum of locally $L^2$ integrable functions. The same holds for $\partial_y$, and hence both $\dz$ and $\dzbar$. Now it follows immediately that $f \in W^{1, 2}_{\text{loc}}(\mathbb{C})$ and satisfies a Beltrami equation; thus, the measurable Riemann mapping theorem (see, for example, Theorem 5.3.2 of \cite{AIM}) gives us the following lemma:

%
%

\begin{lemma}\label{theorem:quasiconformalitySums}
Given a countable set $\{\lambda_n\}_{n = 1}^{\infty} \subseteq \mathbb{D}$, the function $f$ defined in (\ref{eq:qc_sums}) is $K$-quasiconformal.
\end{lemma}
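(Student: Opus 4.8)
The plan is to build directly on the estimates already assembled above: $f \in W^{1,2}_{\text{loc}}(\C)$, it solves $\dzbar f = \mu\, \dz f$ a.e.\ with $\|\mu\|_\infty \le \frac{K-1}{K+1} < 1$, and moreover $\dz f = \suminf 2^{-n}\, \dz f_{\lambda_n}$ is (the series being $L^2_{\text{loc}}$, hence finite a.e.)\ a \emph{positive} real-valued function a.e.\ So $J_f = |\dz f|^2\bigl(1 - |\mu|^2\bigr) > 0$ a.e., and $f$ is a non-constant $K$-quasiregular mapping. The one remaining point -- and the genuine content of the lemma -- is that $f$ is in fact a homeomorphism. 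This is where the measurable Riemann mapping theorem is invoked, but it has to be applied with some care, because that theorem by itself only produces \emph{some} quasiconformal solution of the Beltrami equation, and one must recognize our specific series as that solution.

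The key observation is that $f$ coincides with the identity outside a compact set: each $f_{\lambda_n}$ equals the identity off the disk $\lambda_n + \disk \subseteq 2\disk$, and since $\suminf 2^{-n} = 1$ we get $f(z) = z$ for all $|z| \ge 2$. In particular $\mu \equiv 0$ on $\{|z| > 2\}$, and $f$ is a principal-type map.

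Now apply Stoilow factorization to the $W^{1,2}_{\text{loc}}$ solution $f$ (see \cite{AIM}, Theorem 5.3.2 and the attendant factorization theory): letting $g$ be the principal $K$-quasiconformal homeomorphism of $\C$ associated with $\mu$ by the measurable Riemann mapping theorem, normalized so that $g(z) = z + O(1/z)$ as $|z| \to \infty$, there is a holomorphic $\phi$ on $g(\C) = \C$ with $f = \phi \circ g$; thus $\phi$ is entire. Since $g$ is a homeomorphism asymptotic to the identity at infinity, $g^{-1}(w) = w + O(1/w)$ there, so comparing with $f \equiv \mathrm{id}$ near infinity yields $\phi(w) = f\bigl(g^{-1}(w)\bigr) = g^{-1}(w) = w + O(1/w)$ for all large $|w|$. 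Hence $\phi(w) - w$ is entire and tends to $0$ at infinity, so it vanishes identically by Liouville's theorem; that is, $\phi = \mathrm{id}$ and $f = g$ is $K$-quasiconformal, which is the claim.

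The main obstacle is precisely the step from ``$f$ is $K$-quasiregular'' to ``$f$ is a homeomorphism'': the series $f = \suminf 2^{-n} f_{\lambda_n}$ is not visibly injective -- sums of quasiconformal maps essentially never are -- and injectivity has to be forced by pairing the uniqueness part of the measurable Riemann mapping theorem with the normalization at infinity, as above. A secondary point, essentially already carried out in the dominated-convergence computation preceding the statement, is the justification that the weak derivatives of $f$ really are the $L^2_{\text{loc}}$ series $\suminf 2^{-n}\,\partial_x f_{\lambda_n}$ and $\suminf 2^{-n}\,\partial_y f_{\lambda_n}$, which is what places $f$ in $W^{1,2}_{\text{loc}}(\C)$ with the stated Beltrami coefficient in the first place.
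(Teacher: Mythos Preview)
Your proof is correct and follows essentially the same route as the paper, which simply invokes the measurable Riemann mapping theorem (Theorem~5.3.2 of \cite{AIM}) once $f \in W^{1,2}_{\text{loc}}$ with the right Beltrami coefficient has been established. In fact you are more careful than the paper: where the paper leaves the passage from ``$W^{1,2}_{\text{loc}}$ solution'' to ``homeomorphism'' implicit in the citation, you make the identification of $f$ with the principal solution explicit via Stoilow factorization and Liouville, using the clean observation that $f \equiv \mathrm{id}$ on $\{|z| \ge 2\}$.
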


We now claim that this function $f$ has the correct stretching behavior at each point in $\Lambda$. Fix $\lambda_n \in \Lambda$; we can assume that $\lambda_n = 0$. Morally, we proceed as follows: there are contributions to the stretching from terms on two scales, the nearby and the far away. We can arrange it so that nearby points $\lambda_m$ only have very large indices, so that the exponentially decaying weights will render this negligible; on the other hand, far away points have the advantage of the smoothness of the radial stretches.

Let us make this precise. We will show that
\begin{equation}\label{eq:stretch_error}
|f(r) - f(0)| = c r^{1/K} + o(r^{1/K})
\end{equation}
with a non-zero constant $c$, from which the theorem will follow. First of all, it is clear that the term $n = m$ contributes exactly $\frac{1}{2^m} r^{1/K}$; we will estimate away the remaining terms. To this end, we have for terms with $m \ne n$ that the difference is
\begin{align*}
\sum_{m \ne n} \frac{1}{2^m} (r - \lambda_m) |r - \lambda_m|^{\frac 1 K - 1} - \frac{1}{2^m} (-\lambda_m) |-\lambda_m|^{\frac 1 K - 1}
\end{align*}
After factoring a term $-\lambda_m |-\lambda_m|^{\frac 1 K - 1}$ from each summand and applying the triangle inequality, we need to estimate
$$\sum_{m \ne n} \frac{1}{2^m} |\lambda_m|^{1/K} \left|\left(1 - \frac r {\lambda_m}\right) \left|1 - \frac{r}{\lambda_m}\right|^{1/K - 1} - 1\right|.$$
%
%
To deal with the term within the absolute value, we need a simple estimate of a particular function:
\begin{lemma}\label{theorem:stretchingEstimate}
If $K > 1$,
$$\left|(1 + z)|1 + z|^{1/K - 1} - 1\right| \le C_0 \min\left\{|z|, |z|^{1/K}\right\}.$$
for a constant $C_0$ depending only on $K$.
\end{lemma}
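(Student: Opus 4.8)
The plan is to treat the two regimes $|z|$ small and $|z|$ large separately, since $\min\{|z|, |z|^{1/K}\} = |z|$ for $|z| \le 1$ and $= |z|^{1/K}$ for $|z| \ge 1$. The function to control is $g(z) = (1+z)|1+z|^{1/K-1} - 1$, which is the increment of the radial stretch $w \mapsto w|w|^{1/K-1}$ between $w = 1$ and $w = 1+z$.

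For the large regime $|z| \ge 1$, I would argue directly from the triangle inequality: $|(1+z)|1+z|^{1/K-1}| = |1+z|^{1/K} \le (1 + |z|)^{1/K} \le 2^{1/K}|z|^{1/K}$ (using $|z| \ge 1$), so $|g(z)| \le 2^{1/K}|z|^{1/K} + 1 \le (2^{1/K}+1)|z|^{1/K}$, which gives the bound with $C_0 \ge 2^{1/K}+1$ on this range. No cancellation is needed here.

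For the small regime $|z| \le 1$, the point is that $g$ vanishes to first order at $z = 0$, so I want $|g(z)| \lesssim |z|$. The cleanest route is to write $h(w) = w|w|^{1/K-1} = w^{(K+1)/(2K)}\bar w^{(1-K)/(2K)}$ (interpreting powers via a branch of the logarithm valid on a neighborhood of $w=1$, say on the half-plane $\operatorname{Re} w > 0$, which contains $1+z$ for $|z| \le 1/2$). On that neighborhood $h$ is smooth with bounded gradient — explicitly $\dz h = \frac{K+1}{2K}|w|^{1/K-1}$ and $\dzbar h = \frac{1-K}{2K} w|w|^{1/K-1}/\bar w$, both bounded by a constant depending on $K$ when $\operatorname{Re} w \ge 1/2$ — so the mean value inequality along the segment from $1$ to $1+z$ yields $|g(z)| = |h(1+z) - h(1)| \le C(K)|z|$ for $|z| \le 1/2$. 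For $1/2 \le |z| \le 1$ one simply bounds $|g(z)| \le |1+z|^{1/K} + 1 \le 2^{1/K} + 1 \le (2^{1/K+1}+2)|z|$ since $|z| \ge 1/2$. Taking $C_0$ to be the maximum of the constants arising in these three cases completes the proof.

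The only mild subtlety — and the step I would be most careful about — is the branch issue: the factorization $w|w|^{1/K-1} = w^{(K+1)/(2K)}\bar w^{(1-K)/(2K)}$ and hence the smoothness argument is only valid away from $w = 0$, so one must confine the mean value estimate to $|z| \le 1/2$ (where the segment from $1$ to $1+z$ stays in $\operatorname{Re} w \ge 1/2 > 0$) and handle $1/2 < |z| \le 1$ by the crude triangle-inequality bound above. Alternatively, and perhaps more elementarily, one can avoid complex powers entirely: write $g(z) = \big((1+z) - (1+\bar z \cdot z/|z|^2)\big)$-type manipulations, or just note $|g(z)|^2$ expands to a real-analytic expression in $\operatorname{Re} z, |z|$ that is $O(|z|)$ near the origin; either way the essential content is the first-order vanishing, and no genuine obstacle arises.
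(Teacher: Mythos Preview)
Your proof is correct and shares the paper's overall structure: split at $|z|\approx 1$, handle large $|z|$ by the triangle inequality, and for small $|z|$ exploit first-order vanishing of $g$ at the origin. The execution of the small-$|z|$ case differs: the paper substitutes $|1+z|=1+y$ with $|y|\le|z|$, writes $z=\lambda y$, and Taylor-expands $(1+y)^{1/K-1}$ explicitly to get $g(z)=z+(1/K-1)y+O(y^2)$; you instead view $w\mapsto w|w|^{1/K-1}$ as a $C^1$ map on $\{\operatorname{Re} w\ge 1/2\}$, bound its Wirtinger derivatives directly, and apply the mean value inequality along the segment from $1$ to $1+z$. Your route is somewhat cleaner and sidesteps the auxiliary parameter $\lambda$; the paper's is more hands-on and makes the leading term visible. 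Your treatment of the branch issue---restricting the gradient estimate to $|z|\le 1/2$ and covering $1/2<|z|\le 1$ by a crude bound---is exactly the right precaution and mirrors the paper's own restriction to $|z|\le 1/2$.
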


\begin{proof}
For large values of $|z|$, the triangle inequality implies that this is controlled by a constant multiple of $|z|^{1/K}$, which is smaller (up to a constant) than $|z|$. So let us assume that $|z|$ is small, e.g. $|z| \le \frac 1 2$. Write $|1 + z| = 1 + y$ with $y$ real and $|y| \le |z|$. 

If $y = 0$, $|1 + z| = 1$ and
$$(1 + z)|1 + z|^{1/K - 1} - 1 = z.$$
Otherwise, select $\lambda$ so that $\lambda y = z$; then Taylor expansion gives
\begin{align*}
(1 + z) |1 + z|^{1/K - 1} - 1 &= (1 + \lambda y) (1 + y)^{1/K - 1} - 1 \\
&= 1 + \left(\lambda + \frac 1 K - 1\right) y + O(y^2) - 1 \\
&= \left(\lambda + \frac 1 K - 1\right) y + O(y^2) \\
&= z + \left(\frac 1 K - 1\right) y + O(y^2) \\
&= z + O(|z|) + O(|z|^2)
\end{align*}
from which the lemma follows. 
\end{proof}

Now we are ready to make the division into two scales. The cutoff point is to separate in the following way: Since the sequence is fixed, we can choose $r$ small enough that
$$\frac{r}{|\lambda_m|} \ge \left(\frac{1}{2^{n + 1} C_0}\right)^{\frac{1}{1 - 1/K}} \implies m \ge n + a + 10$$
where $a$ is chosen so that $2^a > C_0$; $C_0$ here is the constant of Lemma \ref{theorem:stretchingEstimate}.
That is, when $|\lambda_m|$ is smaller than a very large constant multiple of $r$, the index must be very large.

The far scale is for terms when $(r/|\lambda_m|)^{1 - 1/K} < 1/2^{n + 1} C_0$. In this case we have the lemma's linear estimate available, and the sum over these indices $m$ is at most
$$C \sum_{m \text{ far}} \frac{1}{2^m} |\lambda_m|^{1/K} \frac{r}{|\lambda_m|} = C_0r^{1/K} \sum_{m \text{ far}} \frac{1}{2^m} \left(\frac{r}{|\lambda_m|}\right)^{1 - 1/K} < \frac{r^{1/K}}{2^{n + 1}}$$
which is enough. Note that we have no control over the index $m$ here.

Next is the nearby scale where we have the opposite inequality; now $m$ must be large but we have worse control on the summands. Using the non-linear estimate from the lemma, we find that the contribution is at most
$$C_0 \sum_{m \text{ near}} \frac{1}{2^m} |\lambda_m|^{1/K} \left(\frac{r}{|\lambda_m|}\right)^{1/K} = C_0\sum_{m \text{ near}} \frac{1}{2^m} r^{1/K} \le \frac{C_0}{2^{n + a + 9}} r^{1/K} < \frac{r^{1/K}}{2^{n + 9}}$$
having used the fact that $\sum_{m \ge N} \frac{1}{2^m} = \frac{1}{2^{N+1}}$.

Combining these two estimates, the contribution from all indices $m \ne n$ is of the order $r^{1/K}$ with constant significantly less than $2^{-n}$. This proves (\ref{eq:stretch_error}) and is the desired result.
\end{proof}

%
%

%
%
\section{Dimension Greater than Zero}

%
%

To prepare for the main result, we will define a particular class of gauge functions. These will be gauges which lead to minor perturbations of the pure Hausdorff meaures, without changing the dimension. The perturbations should be chosen to tend to zero slowly enough to guarantee this, and will contain some sort of embedded convexity condition.

\begin{defn}\label{defn:admissibleGauge} We will say that a gauge function $\Lambda(r) = r^d h(r)$ is \textbf{admissible} if $h(r)$ is continuous, nonnegative, non-decreasing on $[0, \infty)$, and satisfies the following decay condition at the origin: For every $\epsilon > 0$, there exists a constant $C_{\eps}$ such that for any $0 < r \le s \le 1$,
$$\frac{h(r)}{h(s)} \ge C_{\eps} \left(\frac r s\right)^{\eps}.$$
\end{defn}

It will be proven later that functions of the form $(\log(1/r))^{-\beta}$ for $\beta > 0$ are admissible, giving a rich class of examples. We now come to the first theorem of the section.
%
%
\begin{theorem}\label{theorem:gaugedStretches}
Let $\Lambda$ be an admissible gauge function. Fix $K$ and $\alpha \in (1/K, 1)$, setting $d = F_K(\alpha, 0)$. Then there is a set $E$ with positive gauged Hausdorff measure $\mathcal{H}^{\Lambda}(E)$ and a $K$-quasiconformal map $f$ so that $f$ stretches like $\alpha$ at every point in $E$.
\end{theorem}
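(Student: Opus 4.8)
The plan is to build a Cantor-type set $E$ inside $\disk$ together with a $K$-quasiconformal map $f$ by iterating the local radial modification from the proof of Theorem \ref{theorem:uncountableCantorSet}, but with two key changes: at each stage we keep a \emph{large} number $N_n$ of children disks (rather than just two), and we calibrate the scale ratios $\tilde r_n / r_n$ and the cardinalities $N_n$ so that the natural mass distribution on $E$ satisfies a Frostman-type lower bound against the gauge $\Lambda$. Concretely, I would start from $B_{0,1} = \disk$ with $f = \mathrm{id}$; given a stage-$n$ disk $B_{n,i} = B(z_{n,i}, r_n)$ on which $f$ is affine, shrink to a concentric sub-disk $A_{n,i}$ of radius $\tilde r_n$, pack $N_n$ disjoint disks $B_{n+1,j}$ of radius $c \tilde r_n / N_n^{1/2}$ (so that $N_n$ of them fit in $A_{n,i}$) into $A_{n,i}$, and redefine $f$ on $B_{n,i}$ exactly as in Theorem \ref{theorem:uncountableCantorSet} with $\gamma = 0$: a pure radial stretch $z|z/r_n|^{\alpha - 1}$ on the annulus $B_{n,i}\setminus A_{n,i}$, and the affine map $\beta_{n,i}(\tilde r_n / r_n)^{\alpha-1} z$ on $A_{n,i}$ (no rotation factor is needed since $\gamma = 0$). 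Exactly as there, each such modification is injective and $K$-quasiconformal precisely because $z|z|^{\alpha(1+i\cdot 0)-1}$ is $K$-quasiconformal, which holds for $\alpha \in (1/K, 1)$; the limit $f$ is then $K$-quasiconformal.

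The set is $E = \bigcap_n \bigcup_i B_{n,i}$. For the stretching, Lemma \ref{lemma:generalApproximation} is the crucial tool: every $z \in E$ lies in a nested sequence of disks $B_{n,i(n)}$, and the computation of Theorem \ref{theorem:uncountableCantorSet} (equation (\ref{eq:dim_zero_stretch}) with $\gamma = 0$) shows $\log|f(B_{n,i})| / \log|B_{n,i}| \to \alpha$ provided we again choose the $\tilde r_n / r_n$ to decay fast enough that the $\log|\beta_{n,i}|$-error term is negligible — here we need $d = F_K(\alpha,0) = 1 + \alpha - \frac{K+1}{K-1}|1-\alpha|$ to be the dimension that the natural measure actually charges, which forces the relation between $N_n$ and $\tilde r_n / r_n$. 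I would then put the uniform probability measure $\nu$ on $E$ ($\nu(B_{n,i}) = 1/(N_0 \cdots N_{n-1})$) and verify a gauged Frostman condition $\nu(B(x,\rho)) \lesssim \Lambda(\rho)$ for all small $\rho$: for $\rho$ comparable to a construction scale this reduces to $1/(N_0\cdots N_{n-1}) \lesssim \tilde r_{n-1}^d h(\tilde r_{n-1})$ (and an intermediate-scale estimate counting how many stage-$(n+1)$ disks meet $B(x,\rho)$), and then the mass distribution principle gives $\mathcal H^\Lambda(E) > 0$.

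The main obstacle — and the place where admissibility of $\Lambda$ enters essentially — is reconciling two competing demands on the scale ratios. To make the Frostman bound come out with the \emph{extra} factor $h(\tilde r_{n-1})$ (smaller than a pure power), one wants the construction to be "denser" than the bare $d$-dimensional Cantor set, i.e. to pack slightly more children or shrink slightly less; but pushing the density up risks either destroying the stretching limit (the $\beta$-error in (\ref{eq:dim_zero_error})) or violating disjointness. The decay condition $h(r)/h(s) \ge C_\eps (r/s)^\eps$ is exactly what says $h$ decays slower than any power, so the required surplus in packing is sub-polynomial and can be absorbed: I would choose $\tilde r_n = r_n^{m_n}$ with $m_n \to \infty$ slowly (as in Theorem \ref{theorem:uncountableCantorSet}, but more carefully tuned) and $N_n \sim (\tilde r_n / r_n)^{-d}\cdot(\text{correction from } h)$, then check that the geometric-series estimates for $\nu(B(x,\rho))$ at intermediate scales $\tilde r_n \le \rho \le r_n$ still close, using monotonicity of $h$ to bound $h(\rho)$ from below by $h(\tilde r_n)$. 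The bookkeeping for intermediate scales (how a ball of radius $\rho$ between two construction scales meets the children) is the routine-but-delicate part; the conceptual content is the matching of $N_n$, $m_n$, and $h$ via the admissibility inequality.
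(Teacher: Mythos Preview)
Your proposal has a genuine and fatal gap: the construction you describe cannot produce a set of positive dimension. You reuse the local model of Theorem~\ref{theorem:uncountableCantorSet}, namely the radial stretch $z|z/r_n|^{\alpha-1}$ on the annulus $B_{n,i}\setminus A_{n,i}$ and the affine map on $A_{n,i}$, and then place $N_n$ children inside $A_{n,i}$. Write $a_n=-\log r_n$, $b_n=-\log\tilde r_n$. The image radius $\rho_n$ of $B_{n,i}$ satisfies $-\log\rho_n=a_n+(\alpha-1)\sum_{k<n}(b_k-a_k)$, so the stretching quotient at the building-block scale is
\[
\frac{\log\rho_n}{\log r_n}=1+(\alpha-1)\,\frac{\sum_{k<n}(b_k-a_k)}{a_n},
\]
and forcing this to tend to $\alpha$ requires $\sum_{k<n}(b_k-a_k)/a_n\to 1$. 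On the other hand, since the children live inside $A_{n,i}$, the best you can do is $N_n\lesssim(\tilde r_n/r_{n+1})^2$, giving $\log(N_0\cdots N_{n-1})\le 2\sum_{k<n}(a_{k+1}-b_k)$. Telescoping $a_n=\sum_{k<n}(a_{k+1}-b_k)+\sum_{k<n}(b_k-a_k)$ then forces $\sum_{k<n}(a_{k+1}-b_k)/a_n\to 0$, so the dimension of $E$ is $0$, not $d=F_K(\alpha,0)>0$. This is exactly why Theorem~\ref{theorem:uncountableCantorSet} was a dimension-zero result: the $\alpha$-stretch model leaves no room for a large Cantor set once the stretching limit is enforced. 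No tuning of $N_n$ or $m_n$, and no appeal to admissibility of $h$, can repair this.

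The paper avoids this by using the \emph{extremal} exponent $1/K$ on the annuli rather than $\alpha$ itself (this is the construction of Uriarte-Tuero). With the $1/K$-stretch, the same computation gives stretching quotient $1+(1/K-1)\sum_{k<n}(b_k-a_k)/a_n$, so now $\sum_{k<n}(b_k-a_k)/a_n$ need only tend to $\tfrac{K(1-\alpha)}{K-1}<1$, leaving $\sum_{k<n}(a_{k+1}-b_k)/a_n\to\tfrac{K\alpha-1}{K-1}$ and hence dimension exactly $2(K\alpha-1)/(K-1)=d$. Concretely the paper packs each generation completely (many radii $R_{N,j}$ with $\sum_j m_{N,j}R_{N,j}^2=1$), shrinks by factors $\sigma_{N,j}^K$ with $\sigma$ determined by a governing equation tying the product $\prod R\sigma^K$ to the gauge $h$, and then proves $\mathcal H^\Lambda(E)>0$ via a Carleson-type packing estimate rather than a Frostman bound. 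The admissibility condition on $h$ enters to guarantee the governing equation is solvable with all $\sigma_{N,j}$ small and to close the Carleson inequality, not merely to absorb a sub-polynomial surplus as you suggest.
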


\begin{proof}
%
%
The main construction of the proof is taken from \cite{UT08}, although our choice of parameters will be different. We retain the notation from that paper, and for the sake of self-containment give a brief description of the construction. At each stage of the construction, we will pack a disk completely with disjoint disks, and then shrink these disks appropriately to build a set of the desired Hausdorff dimension. The quasiconformal map will stretch these shrunken disks appropriately.

\textbf{Step 1.} Select $m_{1, 1}$ disjoint disks $D^{i}_{1, 1}$ of radius $R_{1, 1}$ within the unit disk, followed by $m_{1, 2}$ disjoint disks (and disjoint from the previously constructed disks as well) $D^i_{1, 2}$ of radius $R_{1, 2}$, and so on. In this manner we pack the unit disk completely in area, leading to
$$\sum_{j = 1}^{\infty} m_{1, j} R_{1, j}^2 = 1.$$
It is important to note that we can assume that every $R_{1, j}$ is smaller than some fixed $\delta_1 > 0$, which is as small as we desire. Also for each radius associate a parameter $\sigma_{1, j} > 0$; these will be chosen later, but are all quite small. 

Next, we construct a first approximation of our quasiconformal map. Denote the center of the disk $D^{i}_{1, j}$ as $z^i_{1, j}$. Let $\psi^i_{1, j}(z) = z^i_{1, j} + (\sigma_{1, j})^K R_{1, j} z$, and define disks
\begin{align*}
D^i_j &= D(z^i_{1, j}, R_{1, j}) = \frac{1}{(\sigma_{1, j})^K} \psi^i_{1, j} (\mathbb{D}) \\
(D^i_j)' &= D(z^i_{1, j}, (\sigma_{1,j})^K R_{1, j}) = \psi^i_{1, j}(\mathbb{D})
\end{align*}
Then our first approximation is
$$\varphi_1(z) = \left\{\begin{array}{ll}
(\sigma_{1, j})^{1 - K} (z - z^i_{1, j}) + z^i_{1, j}, \quad & z \in (D^i_j)' \\
\left|\frac{z - z^i_{1, j}}{R_{1,j}}\right|^{\frac 1 K - 1} (z - z^i_{1, j}) + z^i_{1, j}, \quad & z \in D^i_j \setminus (D^i_j)' \\
z, \quad & z \notin \bigcup D^i_j.
\end{array}\right.$$
This is $K$-quasiconformal, being a modification of a radial stretch, and is conformal except for the annular regions between small disks $(D^i_j)'$ and their dilates $D^i_j$. In particular, it is important to note that $\varphi_1$ maps the disks of radius $(\sigma_{1, j})^K R_{1, j}$ onto other disks of radius $\sigma_{1, j} R_{1, j}$.

\textbf{Step 2.} We repeat the idea of the construction from the previous step. Choose $m_{2, 1}$ disjoint disks $D^i_{2, 1}$ with centers $z^i_{2, 1}$ of radius $R_{2, 1}$, and so on; again these will be subject to the constraint
$$\sum_{j = 1}^{\infty} m_{2, j} R_{2, j}^2 = 1.$$
Again, we can choose $R_{2, j}$ to be bounded by some $\delta_2 > 0$, but as small as needed; this is the difference from step 1, as we may wish to have $\delta_2 < \delta_1$. Next, we choose $\sigma_{2, j} > 0$.

As before, we follow this with an approximation of the quasiconformal map. Set $\psi^n_{2, k}(z) = z^n_{2, k} + (\sigma_{2, k})^K R_{2, k} z$, a radius $r_{\{2, k\},\{1,j\}} = R_{2, k} \sigma_{1, j} R_{1, j}$ and define disks
\begin{align*}
D^{i, n}_{j, k} &= D(z^{i, n}_{j, k}, r_{\{2, k\},\{1,j\}}) = \varphi_1 \left(\frac{1}{(\sigma_{2, k})^K} \psi^i_{1, j} \circ \psi^n_{2, k}(\mathbb{D})\right) \\
(D^{i, n}_{j, k})' &= D(z^{i, n}_{j, k}, (\sigma_{2, k})^K r_{\{2, k\},\{1,j\}}) = \varphi_1\left(\psi^i_{1, j} \circ \psi^n_{2, k} (\mathbb{D})\right)
\end{align*}
Now we define
$$g_2(z) = \left\{\begin{array}{ll}
(\sigma_{2, k})^{1 - K} (z - z^{i,n}_{j,k}) + z^{i,n}_{j,k}, \quad & z \in (D^{i,n}_{j,k})' \\
\left|\frac{z - z^{i,n}_{j,k}}{r_{\{2, k\},\{1,j\}}}\right|^{\frac 1 K - 1} (z - z^{i,n}_{j,k}) + z^{i,n}_{j,k}, \quad & z \in D^{i,n}_{j,k} \setminus (D^{i,n}_{j,k})' \\
z, \quad & \text{otherwise}.
\end{array}\right.$$
Finally, our second approximation is $\varphi_2 = g_2 \circ \varphi_1$. As before, this is a $K$-quasiconformal map equal to the identity outside the unit disk; the most important thing to note is that this map behaves essentially as a radial stretch, sending certain disks of radius $(\sigma_{1, j} \sigma_{2, k})^K R_{1, j} R_{2, k}$ to certain other disks of radius $(\sigma_{1, j} \sigma_{2, k}) R_{1, j} R_{2, k}$.
%
%

\textbf{Induction step.} Assuming that $N - 1$ steps of the construction have been fulfilled, we repeat the process, getting disks $D^i_{N, j}$ with centers $z^q_{N, p}$, radii $R_{N, p}$ and satisfying
$$\sum_{j = 1}^{\infty} m_{N, j} R_{N, j}^2 = 1.$$
As before, we have a constraint $R_{N, j} < \delta_N$ and parameters $\sigma_{N, j} > 0$.

We proceed with the next approximation of the quasiconformal map. Define radii
$$r_{\{N, p\}, \{N - 1, h\}, \dots, \{1, j\}} = R_{N, p} \sigma_{N - 1, h} r_{\{N - 1, h\}, \dots, \{1, j\}}$$
and maps $\psi^q_{N, p}(z) = z^q_{N, p} + (\sigma_{N, p})^K R_{N, p} z$. For multiindices $I = (i_1, ..., i_N)$ and $J = (j_1, ..., j_N)$, we define disks
\begin{align*}
D^I_J &= D(z^I_J, r_{\{N, p\}, \dots, \{1, j\}}) = \varphi_{N - 1} \left(\frac{1}{(\sigma_{N, p})^K} \psi^{i_1}_{1, j_1} \circ \cdots \circ \psi^{i_N}_{N, j_N} (\mathbb{D})\right) \\
(D^I_J)' &= D(z^I_J, (\sigma_{N, p})^K r_{\{N, p\}, \dots, \{1, j\}}) = \varphi_{N - 1} \left(\psi^{i_1}_{1, j_1} \circ \cdots \circ \psi^{i_N}_{N, j_N} (\mathbb{D})\right)  
\end{align*}
As usual, we set
$$g_N(z) = \left\{\begin{array}{ll}
(\sigma_{N, p})^{1 - K} (z - z^I_J) + z^I_J, \quad & z \in (D^I_J)' \\
\left|\frac{z - z^I_J}{r_{\{N, p\}, \dots, \{1, j\}}}\right|^{\frac 1 K - 1} (z - z^I_J) + z^I_J, \quad & z \in D^I_J \setminus (D^I_J)' \\
z, \quad & \text{otherwise}.
\end{array}\right.$$

This map is $K$-quasiconformal, conformal outside of the union of all the annuli and preserves the disks $D^I_J$. We finally set $\varphi_N = g_N \circ \varphi_{N - 1}$, noting that this is the identity outside the unit disk and maps disks of radius $(\sigma_{1, j_1} \cdots \sigma_{N, j_N})^K R_{1, j_1} \cdots R_{N, j_N}$ to disks of radius $(\sigma_{1, j_1} \cdots \sigma_{N, j_N}) R_{1, j_1} \cdots R_{N, j_N}$. 

We now take the limits resulting from this construction. As $\varphi_N$ is a $K$-quasiconformal map which is the identity outside of $\mathbb{D}$, compactness of quasiconformal maps allows us to select a $K$-quasiconformal limit
$$f = \lim_{n \to \infty} \varphi_N$$
with convergence in the Sobolev space $W^{1, 2}_{\text{loc}}$. 

To recap, the result of the above construction is a Cantor type set $E$ whose building blocks at generation $N$ are disks with radius 
$$s_{j_1...j_N} = \left((\sigma_{1, j_1})^K R_{1, j_1}\right)\dots\left((\sigma_{N, j_N})^K R_{N, j_N}\right)$$
which are mapped to disks of radius
$$t_{j_1...j_N} = \left(\sigma_{1, j_1} R_{1, j_1}\right)\dots\left(\sigma_{N, j_N} R_{N, j_N}\right)$$
where we can choose $\sigma_{i, j_i}$ more or less freely, subject to the constraint that they are all small.

%
%
Now we will select our parameters $\sigma_{k, j_k}$. We will choose them subject to the governing equation
\begin{align}
R_{1, j_1}^2 \cdots R_{N, j_N}^2 &= (R_{1, j_1} \cdots R_{N, j_N})^d (\sigma_{1, j_1} \cdots \sigma_{N, j_N})^{Kd} \nonumber \\
&\quad\quad \cdot h(R_{1, j_1} \cdots R_{N, j_N} \sigma_{1, j_1}^K \cdots \sigma_{N, j_N}^K). \label{eq:governing}
\end{align}
If we write $\sigma_{k, j_k} = R_{k, j_k}^{\frac{2 - d}{Kd}} \eta_{k, j_k}$, the condition is equivalent to
\begin{equation}\label{eq:governing_simplified}
1 = \eta_{1, j_1}^{Kd} \cdots \eta_{N, j_N}^{Kd} h\left(R_{1, j_1}^{2/d} \cdots R_{N, j_N}^{2/d} \eta_{1, j_1}^K \cdots \eta_{N, j_N}^K\right).
\end{equation}

To see the relevance of the governing equation, note that if we sum over all the building blocks of our construction at level $N$, our choice of parameters gives us
\begin{align*}
\sum_{j_1, ..., j_N} m_{1, j_1} \cdots m_{n, j_n} s_{j_1, \dots, j_n}^d h(s_{j_1, \dots, j_n}) &= \sum_{j_1, \dots, j_N} (R_{1, j_1} \cdots R_{N, j_N})^2 = 1
\end{align*}
This is suggestive of the desired result, namely that the constructed set has positive measure in the gauge $r^d h(r)$.

We now have three questions left to address: whether we can actually select our parameters $\sigma$ in this manner, whether the Cantor set will exhibit the correct stretching, and whether the set has positive measure with respect to $\mathcal{H}^{\Lambda}$. 

%
%
First, we consider the satisfiability of the governing equation for $\sigma_{k, j_k}$; the selection is made inductively. Looking at the second form of our governing equation, and recalling that $h$ is continuous, it is immediately clear that we can select $\eta_{N, j_N}$ to satisfy the equation - the right hand side tends to zero as $\eta_{N, j_N}$ does, and to infinity as $\eta_{N, j_N}$ does. The only concern is that $\eta_{N, j_N}$ might be so large as to defeat our requirement that $\sigma_{N, j_N}$ is small. First, notice that $R_{N, j_N} \sigma_{N, j_N}^K < 1$; if it were not, then we would have
\begin{align*}
R_{1, j_1}^2 \cdots R_{N, j_N}^2 &= (R_{1, j_1} \cdots R_{N - 1, j_{N - 1}})^d (\sigma_{1, j_1} \cdots \sigma_{N - 1, j_{N - 1}})^{Kd} \\
&\quad \quad \cdot h\left(R_{1, j_1} \cdots R_{N, j_N} \sigma_{1, j_1}^K \cdots \sigma_{N, j_N}^K\right) (R_{N, j_N} \sigma_{N, j_N}^K)^d \\
&\ge (R_{1, j_1} \cdots R_{N - 1, j_{N - 1}})^d (\sigma_{1, j_1} \cdots \sigma_{N - 1, j_{N - 1}})^{Kd} \\
&\quad \quad \cdot h\left(R_{1, j_1} \cdots R_{N-1, j_{N-1}} \sigma_{1, j_1}^K \cdots \sigma_{N-1, j_{N-1}}^K\right) \\
&= R_{1, j_1}^2 \cdots R_{N-1, j_{N-1}}^2
\end{align*}
contradicting the fact that each $R_{k, j_k}$ is much smaller than $1$. 

The above computation also suggests how to bound each $\sigma_{N, j_N}$, by playing the governing equation off itself at different generations. In this manner, essentially just rearranging the above, we find that
$$R_{N, j_N}^2 = R_{N, j_N}^d \sigma_{N, j_N}^{Kd} \frac{h\left(R_{1, j_1} \cdots R_{N, j_N} \sigma_{1, j_1}^K \cdots \sigma_{N, j_N}^K\right)}{h\left(R_{1, j_1} \cdots R_{N-1, j_{N-1}} \sigma_{1, j_1}^K \cdots\sigma_{N - 1, j_{N - 1}^K}\right)}$$
Rearranging for $\sigma_{N, j_N}$ and applying our growth condition with exponent $\epsilon$, we find that
$$\sigma_{N, j_N}^{Kd} \le R_{N, j_N}^{2 - d} \left(\frac{1}{R_{N, j_N} \sigma_{N, j_N}}\right)^{\eps} \frac{1}{C_{\eps}}.$$
Consequently,
$$\sigma_{N, j_N} \le \frac{1}{C_{\eps}^{1/K(d + \eps)}} R_{N, j_N}^{\frac{2 - d - \eps}{Kd}}.$$
As long as $\eps$ is chosen small enough that $2 - d - \eps > 0$, we may choose all $\delta_N$ small enough to result in $\sigma_{N, j_N} < 1/100$ as desired.

%
%
Next, we proceed to the stretching. Following the general approximation lemma \ref{lemma:generalApproximation}, it is sufficient to show that
$$\frac{\log t_{j_1,\dots,j_N}}{\log s_{j_1,\dots,j_N}} \to \alpha$$
as $N \to \infty$. In this direction, observe that
\begin{align*}
\frac{\log t_{j_1,\dots,j_N}}{\log s_{j_1,\dots,j_N}} &= \frac{\sum_{i = 1}^N \log R_{i, j_i} + \sum_{i = 1}^N \log \sigma_{i, j_i}}{\sum_{i = 1}^N \log R_{i, j_i} + K \sum_{i = 1}^N \log \sigma_{i, j_i}} \\
&= \frac{\left(1 + \frac{2 - d}{Kd}\right)\sum_{i = 1}^N \log R_{i, j_i} + \sum_{i = 1}^N \log \eta_{i, j_i}}{\left(1 + K \frac{2 - d}{Kd}\right) \sum_{i = 1}^N \log R_{i, j_i} + \sum_{i = 1}^N \log \eta_{i, j_i}}.
\end{align*}
Now provided that the perturbation terms are negligible with comparison to the radii terms, the stretching result follows. Indeed, in that case the quotient tends towards
$$\frac{1 + \frac{2 - d}{Kd}}{1 + K \frac{2 - d}{Kd}} = \frac{2 + (K - 1)d}{2K} = \alpha.$$
Thus, we need to prove that
$$S_N := \frac{\sum_{i = 1}^N \log \eta_{i, j_i}}{\sum_{i = 1}^N \log R_{i, j_i}}$$
tends to zero as $N$ grows.

To get this result, first notice that $S_N$ is negative: the product of all $\eta_{i, j_i}$ is greater than $1$ (as $h$ is small), while each $R_{i, j_i}$ is less than $1$; see (\ref{eq:governing_simplified}). From this, it follows that
\begin{align*}
0 \ge Kd S_N &= \frac{Kd \sum_{i = 1}^N \log \eta_{i, j_i}}{\sum_{i = 1}^N \log R_{i, j_i}} \\
&= \frac{- \log h\left(R_{1, j_1}^{2/d} \cdots R_{N, j_N}^{2/d} \eta_{1, j_1}^K \cdots \eta_{N, j_n}^K\right)}{\sum_{i = 1}^N \log R_{i, j_i}} \\
&\ge \frac{- \log \left(C_{\epsilon} R_{1, j_1}^{2\epsilon/d} \cdots R_{N, j_N}^{2\eps/d} \eta_{1, j_1}^{K \eps} \cdots \eta_{N, j_N}^{K \eps}\right)}{\sum_{k = 1}^N \log R_{i, j_i}} \\
&= \frac{-\log C_{\eps}}{\sum_{i = 1}^N \log R_{i, j_i}} - \frac{2\eps}{d} - K \eps S_N
\end{align*} 
where in the inequality we have used that $h(r) \ge C_{\epsilon} r^{\epsilon}$ provided that $r$ is sufficiently small, e.g. that $N$ is sufficiently large; this is the admisibility condition (\ref{defn:admissibleGauge}) applied with $s = 1$. To be precise, we require that $N$ is large enough that $R_{1, j_1}^{2\eps/d} \cdots \eta_{N, j_n}^{K\eps} < 1$. Now rearranging the result, we get
$$0 \ge S_N \ge \left(\frac{1}{K(d + \eps)}\right) \left(-\frac{\log C_{\eps}}{\sum_{i = 1}^N \log R_{i, j_i}} - \frac{2\eps}{d}\right)$$
It follows that we have
$$|S_N| \le \frac{\log C_{\epsilon}} {N \log 2} + O(\eps) = O(\eps)$$
provided that $N$ is chosen large enough given $\eps$. Taking $\eps$ to zero gives the required stretching.

%
%
Now all that remains is to show positivity of the measure of the Cantor set. Our starting point is an estimate analogous to equation (3.17) in \cite{UT08}; if $D$ is a building block at generation $N - 1$,
\begin{align}
\sum_{B_n \text{ children of } D} &r(B_n)^{d} h(r(B_n)) = \sum_{j_N} m_{N, j_N} \Lambda\left(R_{1, j_1} \cdots R_{N, j_N} \sigma_{1, j_1}^K \cdots \sigma_{N, j_N}^K\right) \nonumber \\
&= \left[R_{1, j_1} \cdots R_{N - 1, j_{N - 1}} \sigma_{1, j_1}^K \cdots \sigma_{N - 1, j_{N - 1}}^K\right]^d  \nonumber \\
&\quad \cdot \sum_{j_N} m_{N, j_N} (R_{N, j_N} \sigma_{N, j_N}^K)^d h\left(R_{1, j_1} \cdots R_{N, j_N} \sigma_{1, j_1}^K \cdots \sigma_{N, j_N}^K\right) \nonumber \\
&= \sum_{j_N} m_{N, j_N} R_{1, j_1}^2 \cdots R_{N, j_N}^2 \nonumber \\
&= R_{1, j_1}^2 \cdots R_{N - 1, j_{N - 1}}^2 \nonumber \\
&= (R_{1, j_1} \cdots R_{N - 1, j_{N - 1}})^d (\sigma_{1, j_1} \cdots \sigma_{N - 1, j_{N - 1}})^{Kd} \nonumber \\
&\quad \cdot h\left(R_{1, j_1} \cdots R_{N - 1, j_{N - 1}} \sigma_{1, j_1}^K \cdots \sigma_{N-1, j_{N-1}}^K\right) \nonumber \\
&= \Lambda(r(D)) \label{eq:carleson_first}
\end{align}
where we have used the governing equation (\ref{eq:governing}) at generations $N$ and $N - 1$. As a consequence, we can iterate this result to find that if $\{B_n\}$ is a finite collection of building blocks all contained in $D$ (not necessarily of the same generation), and $B_{N,k}$ are the generation $N$ descendents of $B_n$,
$$\sum_{B_n} \Lambda(r(B_n)) = \sum_{B_{N,k}} \Lambda(r(B_{N, k})).$$
We now wish to prove a Carleson style packing condition, from which positivity of measure will follow. We will state this as a separate lemma, similar to Lemma 3.2 of \cite{UT08}.
%
%
\begin{lemma}\label{lemma:carlesonEstimate}
Let $B$ be an arbitrary disk and $B_n$ disjoint building blocks of $E$. There is an absolute constant $C_1$ independent of the family $\mathcal{C} = \{B_n\}$ such that
$$\sum_{\substack{B_n \in \mathcal{C} \\ B_n \subset B}} \Lambda(r(B_n)) \le C_1 \Lambda(r(B)).$$
\end{lemma}

Once the lemma has been proven, the positivity of the gauged Hausdorff measure follows immediately. So let us fix such a family $\mathcal{C}$; we may assume that $r(B) \le 1$, since the above computation (\ref{eq:carleson_first}) shows that the lemma holds when $B = \mathbb{D}$. Choose the integer $H$ such that all the $B_n$ are contained in some building block at generation $H - 1$, but not at generation $H$; then let $\{B^H_{k_p}\}_{p = 0}^m$ be the complete list of ancestors at generation $H$ of our family. Note that the lemma holds with $B = B^{H - 1}_{i_0}$ (by the same reasoning that it holds for $B = \mathbb{D}$) and so we will assume that
$$r(B) \le r(B_{i_0}^{H - 1}) = s_{j_1, ..., j_{H - 1}}.$$
For each of these generation $H$ disks, let $\widetilde{B^H_{k_p}}$ be the concentric dilate with radius 
$$r(\widetilde{B^H_{k_p}}) = \frac{s_{j_1,...,j_H}}{\sigma_{H, j_H}^K}.$$
Provided that the multiindices $I = (i_1,...,i_H)$ and $J = (j_1,...,j_H)$ are chosen appropriately, these disks are the disks $(D^I_J)'$ from the construction of the Cantor set; now as each $\sigma_{N, p}$ is small (e.g. less than $1/100$) and since $B$ meets each $B^H_{k_p}$, we find that
$$2 r(B) \ge \frac{99}{100} r(\widetilde{B^H_{k_p}}).$$
Moreover, we have the containment $\widetilde{B^H_{k_p}} \subseteq 4B$.
We now can compute:
\begin{align*}
\sum_{B_n \in \mathcal{C}} \Lambda(r(B_n)) &\le \sum_{p = 0}^m \Lambda(r(B^H_{k_p})) \\
&= \left[\sigma_{1, j_1}^K R_{1, j_1} \cdots \sigma_{H - 1, j_{H - 1}}^K R_{H - 1, j_{H - 1}}\right]^d \\
&\quad\cdot \sum_{p = 0}^m \left(\sigma_{H, j_{H_{k_p}}}^K R_{H, j_{H_{k_p}}}\right)^d h\left(R_{1, j_1} \cdots \sigma_{H, j_{H_{k_p}}}^K\right) \\
&= s_{j_1, \dots j_{H-1}}^d h(s_{j_1, \dots j_{H-1}}) \sum_{p = 0}^m R_{H, j_{H_{k_p}}}^2 \\
&= s_{j_1, \dots j_{H-1}}^d h(s_{j_1, \dots j_{H-1}}) \frac 1 {\pi} \sum_{p = 0}^m \operatorname{Area}(D_p)
\end{align*}
where we have defined $D_p = D(z^{k_p}_{H, j_{H_{k_p}}}, R_{H, j_{H_{k_p}}})$, recalling that these are disks chosen during the induction step of the Cantor set's construction, called $D^i_{N, j}$. The second to last equality follows from applications of the governing equation at generations $H$ and $H - 1$.

Now since 
$$r(\widetilde{B^H_{k_p}}) = \frac{s_{j_1, \dots, j_{H_{k_p}}}}{\sigma_{H, j_{H_{k_p}}}^K}$$ 
and 
$$r(D_p) = R_{H, j_{H_{k_p}}} = \frac{r(\widetilde{B^H_{k_p}})}{s_{j_1, ..., j_{H - 1} } }$$
it follows that
\begin{align*}
\sum_{B_n \in \mathcal{C}} \Lambda(r(B_n)) &\le s_{j_1, \dots j_{H-1}}^d h(s_{j_1, \dots j_{H-1}}) \left[\frac{r(4B)}{s_{j_1, \dots, j_{H - 1}}}\right]^2 \\
&\lesssim r(B)^d h(s_{j_1, ..., j_{H - 1}}) \left[\frac{r(B)}{s_{j_1, ..., j_{H - 1}}}\right]^{2 - d}
\end{align*}
Finally, recall the condition (\ref{defn:admissibleGauge}) that for any $0 < x < y \le 1$, we have
$$\frac{h(x)}{h(y)} \ge C \left(\frac x y\right)^{2 - d}$$
Applying this to the above, it follows that
$$\sum_{B_n \in \mathcal{C}} \Lambda(r(B_n)) \lesssim r(B)^d h(r(B)) = \Lambda(r(B))$$
as desired.
\end{proof}

We now move to the rotation results.
%
%

\begin{theorem}\label{theorem:gaugedRotations}
Let $\Lambda$ be an admissible gauge function. Fix $K$ and parameters $\alpha, \gamma$ so that $\alpha(1 + i \gamma) \in B_K$ and $\alpha < 1$, setting $d = F_K(\alpha, \gamma)$. Then there is a set $E$ with positive gauged Hausdorff measure $\mathcal{H}^{\Lambda}(E)$ and a $K$-quasiconformal map $f$ so that $f$ stretches like $\alpha$ and rotates like $\gamma$ at every point in $E$.
\end{theorem}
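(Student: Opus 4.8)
The plan is to run the Cantor-type construction of Theorem~\ref{theorem:gaugedStretches} essentially verbatim, replacing the radial-stretch building blocks by \emph{spiral-stretch} blocks. At generation $i$ I would use, in place of the annular map $|z/r|^{1/K-1}(z-z_c)+z_c$, the spiral $|z/r|^{\beta_0-1}(z-z_c)+z_c$ for a fixed complex exponent $\beta_0=\alpha_0(1+i\gamma_0)$ (to be determined), matched on the inner disk $(D^I_J)'$ by the conformal similarity consisting of a dilation by $\sigma_{i,j_i}^{K(\alpha_0-1)}$ followed by a rotation through $K\alpha_0\gamma_0\log\sigma_{i,j_i}$. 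Since this changes only the \emph{arguments} of the intermediate maps and not the radii of any disk or the areas involved, every part of the proof of Theorem~\ref{theorem:gaugedStretches} that concerns sizes carries over unchanged: the governing equation~(\ref{eq:governing}), the substitution $\sigma_{k,j_k}=R_{k,j_k}^{(2-d)/(Kd)}\eta_{k,j_k}$, the bound on the $\sigma_{k,j_k}$, the telescoping identity~(\ref{eq:carleson_first}), and the Carleson packing Lemma~\ref{lemma:carlesonEstimate}; in particular the limiting Cantor set $E$ (the nested intersection of the domain disks of radius $s_{j_1\dots j_N}$) again has $\mathcal H^\Lambda(E)>0$, and each $\varphi_N$ stays injective and equal to the identity off $\disk$.

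The new bookkeeping is the argument. Exactly as before one checks that $\varphi_N$ now carries the generation-$N$ domain disk of radius $s_{j_1\dots j_N}=\prod_i\sigma_{i,j_i}^K R_{i,j_i}$ onto a disk of radius $t_{j_1\dots j_N}=\prod_i\sigma_{i,j_i}^{K\alpha_0}R_{i,j_i}$, while crossing the $i$-th spiral annulus contributes $\alpha_0\gamma_0 K\log\sigma_{i,j_i}$ to the winding of the image ray around the image of the centre. Writing $\Theta_N$ for the total accumulated winding, $\Theta_N=\alpha_0\gamma_0 K\sum_i\log\sigma_{i,j_i}$ up to lower order, and using that $\big(\sum_i\log\eta_{i,j_i}\big)/\big(\sum_i\log R_{i,j_i}\big)\to 0$ (this is the content of the stretching argument in Theorem~\ref{theorem:gaugedStretches}, which only uses admissibility of $\Lambda$) and that $\theta=(2-d)/(Kd)$ gives $1+K\theta=2/d$, one obtains
$$\frac{\log t_{j_1\dots j_N}}{\log s_{j_1\dots j_N}}\longrightarrow\frac{1+K\alpha_0\theta}{1+K\theta}\qquad\text{and}\qquad\frac{\Theta_N}{\log t_{j_1\dots j_N}}\longrightarrow\frac{K\alpha_0\gamma_0\theta}{1+K\alpha_0\theta}.$$
Requiring these to equal $\alpha$ and $\gamma$ forces $\alpha_0=\frac{2\alpha-d}{2-d}$ and $\alpha_0\gamma_0=\frac{2\alpha\gamma}{2-d}$, i.e.
$$\beta_0=\frac{2\alpha(1+i\gamma)-d}{2-d}.$$
With this choice Lemma~\ref{lemma:generalApproximation} gives stretching $\alpha$ at every point of $E$ and Lemma~\ref{lemma:generalApproximationRotation} gives rotation $\gamma$, once one observes that the passage from an arbitrary point of a generation-$N$ disk to its centre, and the non-concentricity of the nested disks, contribute only bounded corrections to the winding that are negligible against $|\log t_{j_1\dots j_N}|\to\infty$ (one may take the bounds $\delta_N$ decreasing to zero to be safe).

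The crux — and essentially the only real computation — is to verify that this \emph{forced} exponent $\beta_0$ actually yields $K$-quasiconformal maps, i.e.\ that $\beta_0\in\overline{B_K}$, and this is exactly where the hypothesis $d=F_K(\alpha,\gamma)$ enters. One computes $\beta_0-1=\frac{2}{2-d}\big(\alpha(1+i\gamma)-1\big)$ and $\beta_0+1=\frac{2}{2-d}\big(\alpha(1+i\gamma)+1-d\big)$, so that
$$\frac{|\beta_0-1|}{|\beta_0+1|}=\frac{|\alpha(1+i\gamma)-1|}{|\alpha(1+i\gamma)+1-d|};$$
on the other hand, squaring the definition of $F_K$ and using $|(\alpha+1-d)+i\alpha\gamma|^2=(1+\alpha-d)^2+\alpha^2\gamma^2$ rearranges it into the identity $\big(\tfrac{K+1}{K-1}\big)^2|\alpha(1+i\gamma)-1|^2=|\alpha(1+i\gamma)+1-d|^2$, valid precisely when $d=F_K(\alpha,\gamma)$. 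Hence the ratio above equals $\tfrac{K-1}{K+1}$, i.e.\ $\beta_0\in\partial B_K$; the spiral blocks are then $K$-quasiconformal with Beltrami coefficient of modulus $\tfrac{K-1}{K+1}$, and since each is glued inside a region where the previous approximation is a similarity, every $\varphi_N$ is $K$-quasiconformal, whence so is $f=\lim\varphi_N$ by compactness.

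A final sanity check: $\alpha<1$ together with $d=F_K(\alpha,\gamma)<2\alpha$ (which is immediate from $\tfrac{K+1}{K-1}>1$) gives $\alpha_0\in(0,\alpha)\subset(0,1)$, so the annular exponent $\alpha_0-1$ is negative and the blocks do expand the small disks, as the construction requires; and the whole thing specializes correctly to the radial case, where $\gamma=0$ makes $\alpha_0\gamma_0=0$ and $\alpha_0=1/K$, recovering the exponent $1/K-1$ of Theorem~\ref{theorem:gaugedStretches}. I expect this reconciliation — which is in effect a re-derivation of the Astala--Iwaniec--Prause--Saksman spectrum from the construction — together with the winding error estimates, to be the entire substance of the proof; the measure-theoretic half is inherited intact from Theorem~\ref{theorem:gaugedStretches}.
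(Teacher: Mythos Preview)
Your proposal is correct and follows essentially the same route as the paper: replace the radial building blocks of Theorem~\ref{theorem:gaugedStretches} by spiral blocks, leave the domain-side geometry (hence the governing equation, the $\eta$-estimates, and the Carleson lemma) untouched, and check the rotation limit. The paper parametrizes this via an auxiliary constant $\overline{K}$ with $F_{\overline{K}}(\alpha,0)=F_K(\alpha,\gamma)$ and cites Hitruhin for the fact that the resulting spiral is $K$-quasiconformal; your exponent $\beta_0=(2\alpha(1+i\gamma)-d)/(2-d)$ is the same object in different coordinates (indeed $\alpha_0=1/\overline{K}$ and your imaginary part equals the paper's), and your direct verification that $|\beta_0-1|/|\beta_0+1|=(K-1)/(K+1)$ exactly when $d=F_K(\alpha,\gamma)$ is a clean, self-contained replacement for that citation.
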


\begin{proof}
%
%
This proof will very closely follow Hitruhin's modifications in \cite{Hitruhin} to add rotation to the previous theorem. We select $\overline{K} < 1/\alpha$ and let $\overline{f}$ be the $\overline{K}$-quasiconformal map previously constructed; the corresponding Cantor set has positive $\mathcal{H}^{r^{\overline{d}} h(r)}$ measure, where 
$$\overline{d} = 1 + \alpha - \frac{\overline{K} + 1}{\overline{K} - 1} (1 - \alpha).$$
Now all we need to do is modify the construction of $\overline{\varphi}_n$ for each $n$ by replacing the old $\overline{g}_n$ by
$$g_n(z) = \left\{\begin{array}{ll}
(\sigma_{n, j_n})^{1 - \overline{K}} (z - z^I_J) e^{i \theta^I_J} + z^I_J, \quad & z \in (D^I_J)' \\
\left|\frac{z - z^I_J}{r(D^I_J)}\right|^{\frac{1}{\overline{K}} - 1 + i \alpha \gamma \frac{\overline{K} - 1}{\overline{K}(1 - \alpha)}} (z - z^I_J) + z^I_J, \quad & z \in D^I_J \setminus (D^I_J)' \\
z, \quad & \text{otherwise}.
\end{array}\right.$$
where the change in argument over the annulus $D^I_J \setminus (D^I_J)'$ is $\theta^I_J$, and makes the map continuous across the boundary crossings. Let $f$ denote the resulting map using $\varphi_n$ and $g_n$, rather than the old versions $\overline{\varphi}_n$ and $\overline{g}_n$.

%
%
Since the paper \cite{Hitruhin} has already shown that $\overline{d} = F_K(\alpha, \gamma)$ is the desired dimension, and the previous theorem improves this to the perturbed Hausdorff gauge function, all that remains is to check that the rotational behavior is correct. That is, we need to show that
$$\lim_{n \to \infty} \frac{\operatorname{arg}(f(z_0 + r_n) - f(z_0))}{\log |f(z_0 + r_n) - f(z_0)|} = \gamma$$
for a suitable choice of scales $r_n \to 0$, and $z_0$ in a large subset of the Cantor set. Following the argument in \cite{Hitruhin}, we end up with the result that the total rotation as we move from $\infty$ to a disk at scale $n$ is
$$\operatorname{arg}\left(f(z_0 + r_n) - f(z_0)\right) = \alpha \gamma \frac{\kbar - 1}{(1 - \alpha)} \sum_{k = 1}^{n - 1} \log \sigma_{k, j_k} + O(n)$$
Now we select our parameters $\sigma_{k, j_k}$ as before, but with $\overline{d}$ and $\kbar$ replacing $d$ and $K$ respectively. With our usual notation
$$\sigma_{k, j_k} = R_{k, j_k}^{\frac{1 - \alpha}{\alpha \kbar - 1}} \eta_{k, j_k}$$
we can compute that
\begin{align*}
&\frac{\operatorname{arg}(f(z_0 + r_n) - f(z_0))}{\log |f(z_0 + r_n) - f(z_0)|} \\
&= \frac{\alpha \gamma \frac{\kbar - 1}{1 - \alpha} \left[\frac{1 - \alpha}{\alpha \kbar - 1} \sum_{k = 1}^{n - 1} \log R_{k, j_k} + \sum_{k = 1}^{n - 1} \eta_{k, j_k}\right]}{\alpha \left(\kbar \frac{1 - \alpha}{\alpha \kbar - 1} \sum_{k = 1}^{n - 1} \log R_{k, j_k} + \sum_{k = 1}^{n - 1} \log R_{k, j_k}  + \sum_{k = 1}^{n - 1} \eta_{k, j_k}\right)}\\
 &\approx \frac{\alpha \gamma \frac{\kbar - 1}{\alpha \kbar - 1}}{\alpha (\kbar \frac{1 - \alpha}{\alpha \kbar - 1} + 1)} \\
&= \frac{\alpha \gamma (\kbar - 1)}{\alpha \kbar (1 - \alpha) + \alpha \kbar - 1} \\
&= \gamma
\end{align*}
as desired, where we have used the previous result that $\sum_{k = 1}^N \eta_{k, j_k}$ is negligible in comaprison to $\sum_{k = 1}^N R_{k, j_k}$. In particular, letting $n \to \infty$, the infinitesimal rotation is exactly $\gamma$.
\end{proof}

Now we would like to generalize this theorem to include stretching exponents greater than $1$; this can be done by considering the inverse function $f^{-1}$, which inverts the stretching exponent and changes the sign of the rotation exponent. However, without assuming additional constraints on the perturbation $h$, it does not seem (to the best of the author's knowledge) possible to identify a gauge function $h'$ for which 
$$\mathcal{H}^{r^{d'} h'(r)}\left(f(E)\right) > 0.$$
It turns out that the key obstacle is a lack of decay in $h$; taking Section 4 of \cite{UT08} as inspiration, we will impose the additional condition that for all $t > 0$, $h(t) \lesssim h(t^K)$. Powers of logarithms such as $(\log 1/r)^{-\beta}$ clearly satisfy this condition, so we still have a useful class of examples.

\begin{theorem}\label{theorem:inverseGauge}
Let $\Lambda(r) = r^{d} h(r)$ be an admisible gauge function, with the additional constraint that $h(r^K) \lesssim h(r)$ for all $r > 0$. Let $E$ and $f$ be the stretching and rotation set and quasiconformal map constructed in Theorem \ref{theorem:gaugedRotations}, with exponents $\alpha$ and $\gamma$. Then $f^{-1}$ stretches with exponent $1/\alpha$ and rotates with exponent $-\gamma$ at every point in $f(E)$ and $f(E)$ has positive measure with respect to the gauge function
$$\Lambda'(r) = r^{d'} h^{d'/Kd}(r).$$
\end{theorem}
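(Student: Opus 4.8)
\section*{Proof proposal for Theorem \ref{theorem:inverseGauge}}

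The inverse of a $K$-quasiconformal map is $K$-quasiconformal, and $f^{-1}$ is again principal, so Lemmas \ref{lemma:generalApproximation} and \ref{lemma:generalApproximationRotation} are available for $f^{-1}$. Recall from the construction behind Theorems \ref{theorem:gaugedStretches}--\ref{theorem:gaugedRotations} that $f$ carries each generation-$N$ building block $D^I_J$ — a round disk of radius $s_{j_1\cdots j_N}$ — onto a round disk $f(D^I_J)$ of radius $t_{j_1\cdots j_N}$, with $\log t_{j_1\cdots j_N}/\log s_{j_1\cdots j_N}\to\alpha$, and that the $f(D^I_J)$ are exactly the generation-$N$ building blocks of $f(E)$. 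Hence every $w_0\in f(E)$ lies in a nested sequence of disks $f(D^I_J)$, each of which $f^{-1}$ maps onto $D^I_J$, with $\log s_{j_1\cdots j_N}/\log t_{j_1\cdots j_N}\to 1/\alpha$; Lemma \ref{lemma:generalApproximation} applied to $f^{-1}$ gives that $f^{-1}$ stretches like $1/\alpha$ at $w_0$. For the rotation I would apply Lemma \ref{lemma:generalApproximationRotation} to $f^{-1}$: traversing in reverse the annulus built into $g_N$ contributes the negative of the argument increment it was designed to produce, so the accumulated argument of $f^{-1}$ about $w_0$ at scale $N$ is minus that of $f$ about $z_0=f^{-1}(w_0)$ up to an $O(N)$ term; dividing by the corresponding log-modulus and reusing, as in Theorem \ref{theorem:gaugedRotations}, that $\sum\log\eta_{k,j_k}$ is negligible against $\sum\log R_{k,j_k}$, the quotient tends to $-\gamma$.

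For the gauge statement put $d'=d/\alpha$; since $\log t_{j_1\cdots j_N}\sim\alpha\log s_{j_1\cdots j_N}$ this is the Hausdorff dimension of $f(E)$ (equivalently $d'=(1/\alpha)F_K(\alpha,\gamma)$). Now $f(E)$ is itself a Cantor set whose generation-$N$ blocks are the round disks of radius $t_{j_1\cdots j_N}$, so it suffices to reprove, with $\Lambda'$ and the $t$-scales replacing $\Lambda$ and the $s$-scales, the two ingredients of Theorem \ref{theorem:gaugedStretches}: a one-generation Carleson balance
$$\sum_{j_N} m_{N,j_N}\,\Lambda'\!\big(t_{j_1\cdots j_N}\big)\ \lesssim\ \Lambda'\!\big(t_{j_1\cdots j_{N-1}}\big),$$
and, built from it by the verbatim argument of Lemma \ref{lemma:carlesonEstimate} (iterate over generations; for an arbitrary disk $B$ compare $B$ with the generation-$H$ ancestors it meets and their concentric dilates), the packing estimate $\sum_{B_n\subset B}\Lambda'(r(B_n))\lesssim\Lambda'(r(B))$ for $f(E)$. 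Then $\mathcal{H}^{\Lambda'}(f(E))>0$ follows from the same mass-distribution argument, using the push-forward $\nu=f_\ast\mu$ of the natural measure $\mu$ on $E$ from the proof of Theorem \ref{theorem:gaugedRotations}, for which $\nu(f(E))=\mu(E)>0$.

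The content is the one-generation balance. Substituting the parametrization of $\sigma_{k,j_k}$ from Theorem \ref{theorem:gaugedRotations} one gets $\sigma_{N,j_N}R_{N,j_N}=R_{N,j_N}^{2\alpha/d}\eta_{N,j_N}$, so that with $d'=d/\alpha$ and $d'/(Kd)=1/(K\alpha)$,
$$\sum_{j_N} m_{N,j_N}\Lambda'(t_{j_1\cdots j_N})=t_{j_1\cdots j_{N-1}}^{\,d'}\sum_{j_N} m_{N,j_N}R_{N,j_N}^2\,\big(\eta_{N,j_N}^{\,Kd}\,h(t_{j_1\cdots j_N})\big)^{1/(K\alpha)}.$$
The weights $m_{N,j_N}R_{N,j_N}^2$ sum to $1$ (the packing constraint) and $x\mapsto x^{1/(K\alpha)}$ is concave because $K\alpha>1$, so Jensen's inequality removes the exponent from the sum and reduces matters to bounding $\sum_{j_N}m_{N,j_N}R_{N,j_N}^2\,\eta_{N,j_N}^{Kd}\,h(t_{j_1\cdots j_N})$ by a constant multiple of $h(t_{j_1\cdots j_{N-1}})$. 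Passing from $h$ at the image scales $t$ to $h$ at the source scales $s$ — via $t_{j_1\cdots j_N}\le s_{j_1\cdots j_N}^{\,\alpha-\epsilon}\le s_{j_1\cdots j_N}^{1/K}$ for $N$ large, monotonicity of $h$, and the doubling-type hypothesis $h(r^{1/K})\lesssim h(r)$ — and then invoking the governing equation (\ref{eq:governing})--(\ref{eq:carleson_first}) for the construction of $E$, which fixes each $\eta_{N,j_N}$ in terms of $h(s_{j_1\cdots j_N})$ and $h(s_{j_1\cdots j_{N-1}})$, together with the monotone bound $h(s_{j_1\cdots j_{N-1}})\le h(t_{j_1\cdots j_{N-1}})$, closes the estimate.

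The main obstacle is exactly this comparison of the gauge across source and image scales. At the level of Hausdorff dimension the perturbations $\eta_{k,j_k}$, hence the gap between the $s$- and $t$-scales, are invisible; at the level of the gauge $h$ they are not, and without control on the decay of $h$ one cannot bound $h(t_{j_1\cdots j_N})$ by a constant times $h(s_{j_1\cdots j_N})$ uniformly over the branches $j_1,\dots,j_N$. It is precisely the extra hypothesis $h(r^K)\lesssim h(r)$ that supplies this, and its absence is why no analogous gauge can be identified for general admissible $\Lambda$, as remarked before the theorem. The remaining technical point is to make the asymptotic $\log t_{j_1\cdots j_N}\sim\alpha\log s_{j_1\cdots j_N}$ effective and uniform in the branch, so that the sandwich $t\le s^{\alpha-\epsilon}\le s^{1/K}$ is legitimate and can be fed the doubling condition.
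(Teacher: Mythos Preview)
The paper does not argue this directly: it observes that the Carleson packing estimate needed on the image side is a minor modification of Theorem 4.2 in \cite{UT08}, and then simply verifies that admissibility together with the extra hypothesis on $h$ supplies the conditions assumed there (that $h$ is increasing, that $t^{\alpha}/h(t)\to 0$, that $h^{1/(2-d)}(t)/t$ is almost decreasing, and the logarithmic-type doubling). Your proposal instead reconstructs the Carleson estimate for $\Lambda'$ from scratch on the image Cantor set; this is a genuinely different, more self-contained route, and the Jensen idea exploiting concavity of $x\mapsto x^{1/(K\alpha)}$ is a nice touch not present in the paper's discussion.

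Your outline is correct in spirit, but there is one real gap. The one-generation balance you obtain via Jensen carries a multiplicative constant $C>1$ coming from the doubling hypothesis, so iterating it ``over generations'' from level $N$ down to level $H$ produces $C^{N-H}$, which is unbounded and breaks the Carleson lemma. The fix is to bypass iteration altogether. A direct computation with the governing equation (\ref{eq:governing_simplified}) yields the exact identity
\[
\Lambda'(t_{j_1\cdots j_N})\;=\;R_{1,j_1}^2\cdots R_{N,j_N}^2\cdot\left(\frac{h(t_{j_1\cdots j_N})}{h(s_{j_1\cdots j_N})}\right)^{1/(K\alpha)},
\]
so a \emph{single} application of the doubling bound $h(t_{j_1\cdots j_N})\lesssim h(s_{j_1\cdots j_N})$ gives $\Lambda'(t_{j_1\cdots j_N})\lesssim R_{1,j_1}^2\cdots R_{N,j_N}^2$, and summing over all generation-$N$ descendants of a generation-$H$ block yields the multi-generation balance with one fixed constant; this also makes the Jensen step unnecessary. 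The uniformity of $t_{j_1\cdots j_N}\le s_{j_1\cdots j_N}^{1/K}$ across branches, which you correctly flag as the remaining technical point, is available from the bound on $S_N$ in the proof of Theorem \ref{theorem:gaugedStretches}, since that bound depends only on $N$ and the universal upper bounds $\delta_k$ on the radii, not on the particular branch $(j_1,\dots,j_N)$.
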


Since we have the additional decay constraint on $h$, the proof of this is a minor modification of that of Theorem 4.2 of \cite{UT08}, again proceeding through a Carleson type estimate. Rather than repeat a sketch of the argument, we will compare our conditions on $h$ to those of Uriarte-Tuero. First of all, for technical reasons, it is important to use only a finite family of disks (as in \cite{UT08}) at each generation of the Cantor set (rather, what is important is that there is a minimum choice of $R_{n, j}$ at each generation $n$, so that the construction of the next scale takes place on stricly smaller scales). In particular, choosing a sequence $\epsilon_n \to 0$ very quickly and packing an $(1 - \epsilon_n)$ portion of the unit disk at each generation will only change the measure by a factor $\prod_{n = 1}^{\infty} (1 - \epsilon_n) \approx 1$, so the finiteness condition is not an obstacle.

Secondly, it is required in Uriarte-Tuero's construction that $h(t)$ is a (strictly) increasing function for which $t^{\alpha} / h(t) \to 0$ as $t \to 0$ for each $\alpha > 0$, that $h^{1/(2 - d)}(t) / t$ is decreasing in $t$, and the logarithmic-type condition $h(t) \lesssim h(t^K)$. The first and fourth conditions hold here, as does the second by the definition of admissibility. Furthermore, admissibility applied with exponent $\eps = 2 - d$ gives us that if $r < s$,
$$\frac{h^{\frac{1}{2 - d}}(r)/r}{h^{\frac{1}{2 - d}}(s)/s} \ge \frac s r C_{\eps} \left(\frac{r}{s}\right)^{\frac{\eps}{2 - d}} = C_{\eps}$$
Although this does not show that $h^{1/(2 - d)}(t) / t$ is decreasing, it is almost decreasing (and in fact, if $s$ is small enough we may assume that $C_{\eps} > 1$, in which case we do have a decreasing function); it turns out that this is enough for the proof of the theorem to go through with minor modifications of the constants. It is worth pointing out, however, that the logarithmic-type decay condition is independent of the admissibility condition in the sense that neither is strong enough to imply the other. 

%
%
Now to show the usefulness of the theorems, it would be nice to give an explicit and interesting gauge function. Fortunately, logarithmic perturbations of $r^d$ are admissible, so we have a variety of gauges for which the theorem holds.
\begin{corl}\label{corl:logPerturbations}
There are positive measure stretching and rotation sets associated to the gauges $\Lambda(r) = r^d \left(\log \frac 1 r\right)^{-\beta}$ for every $\beta > 0$.
\end{corl}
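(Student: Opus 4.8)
The plan is to verify that the gauge functions $\Lambda(r) = r^d (\log \tfrac 1 r)^{-\beta}$ fall within the scope of Theorem \ref{theorem:gaugedRotations} (and, when needed, Theorem \ref{theorem:inverseGauge}), i.e. that $h(r) = (\log \tfrac 1 r)^{-\beta}$ is an admissible gauge function in the sense of Definition \ref{defn:admissibleGauge}. Once admissibility is established, the corollary is immediate: pick any $(\alpha,\gamma)$ with $\alpha(1+i\gamma) \in B_K$, $\alpha < 1$, and set $d = F_K(\alpha,\gamma)$; Theorem \ref{theorem:gaugedRotations} produces the desired $K$-quasiconformal map and the set $E$ with $\mathcal{H}^\Lambda(E) > 0$.

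First I would check the elementary structural requirements: for $r$ in a small neighborhood of $0$ (say $r < e^{-1}$), $\log \tfrac 1 r > 0$, so $h(r) = (\log \tfrac 1 r)^{-\beta}$ is positive, continuous, and — since $\log \tfrac 1 r$ is decreasing and $\beta > 0$ — nondecreasing in $r$. (If one insists on $h$ being defined and nondecreasing on all of $[0,\infty)$ as in the definition, one simply truncates: extend $h$ by a constant for $r$ above the relevant threshold and by $0$ at $r=0$; this does not affect any of the constructions, which only see small scales.) The substantive point is the decay condition: for every $\eps > 0$ there is $C_\eps$ with $h(r)/h(s) \ge C_\eps (r/s)^\eps$ for all $0 < r \le s \le 1$ sufficiently small.

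The main step, then, is this inequality. Writing $u = \log\tfrac 1 r \ge v = \log \tfrac 1 s > 0$ (so $r \le s$ corresponds to $u \ge v$), the claim becomes $(v/u)^\beta \ge C_\eps e^{-\eps(u - v)}$, equivalently $e^{\eps(u-v)} \ge C_\eps^{-1} (u/v)^\beta$ for $u \ge v$. Setting $x = u/v \ge 1$, it suffices to show $e^{\eps v (x - 1)} \gtrsim_\eps x^\beta$ uniformly; since $v$ is bounded below once $s$ is small (say $v \ge v_0 > 0$), it is enough that $e^{c(x-1)} \ge C'_\eps x^\beta$ for all $x \ge 1$ with $c = \eps v_0 > 0$. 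The function $x \mapsto e^{c(x-1)}/x^\beta$ is continuous and positive on $[1,\infty)$ and tends to $+\infty$ as $x \to \infty$ (exponential beats polynomial), hence attains a positive minimum on $[1,\infty)$; call it $1/C'_\eps$. This gives the bound with $C_\eps$ depending on $\eps$, $\beta$, and the threshold $v_0$, which is exactly what admissibility allows. I expect this optimization-of-$e^{c(x-1)}/x^\beta$ to be the only real content, and it is routine.

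Finally, for completeness one should note that $h(r^K) \lesssim h(r)$ also holds here — indeed $h(r^K) = (K \log \tfrac 1 r)^{-\beta} = K^{-\beta} h(r)$ — so that Theorem \ref{theorem:inverseGauge} applies as well, yielding the analogous statement for stretching exponents $1/\alpha > 1$ with the gauge $r^{d'} h^{d'/Kd}(r) = r^{d'}(\log\tfrac 1 r)^{-\beta d'/Kd}$, again of the same logarithmic form. The only mild subtlety worth flagging is the phrase ``sufficiently small'' in the decay condition: the constant $C_\eps$ genuinely depends on the scale threshold through $v_0$, but since Definition \ref{defn:admissibleGauge} and the proof of Theorem \ref{theorem:gaugedRotations} only require the inequality for $r \le s \le 1$ small (one chooses the $\delta_N$ accordingly), this causes no difficulty.
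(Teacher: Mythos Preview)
Your argument is correct and reaches the same conclusion as the paper, but the execution differs. The paper works directly in the variables $(r,s)$: it considers the function
\[
g(r,s) = \frac{s^{\eps/\beta}\log s}{r^{\eps/\beta}\log r}
\]
on the triangle $\{0 < r \le s \le 1/100\}$, differentiates in $r$ to locate the critical point $r = e^{-\beta/\eps}$, and then splits into the two cases $s \ge e^{-\beta/\eps}$ and $s < e^{-\beta/\eps}$, optimizing separately in each. Your substitution $u = \log(1/r)$, $v = \log(1/s)$ and the reduction to the single variable $x = u/v \ge 1$ collapses this case analysis: once $v \ge v_0$ is fixed by the ``sufficiently small'' threshold, the question becomes whether $e^{c(x-1)}/x^{\beta}$ has a positive infimum on $[1,\infty)$, which follows immediately from continuity, positivity, and exponential-versus-polynomial growth. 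This is a bit cleaner than the paper's two-case calculus computation, and it makes the dependence of $C_\eps$ on the threshold transparent. You also record the extra observation $h(r^K) = K^{-\beta} h(r)$, which the paper uses elsewhere (for Theorem~\ref{theorem:inverseGauge}) but does not restate in this proof; including it here is a nice touch that rounds out the applicability to the $\alpha > 1$ regime.
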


To be precise, this is not a well-defined gauge function for $r \ge 1$; we ought to cut it off at some point between $0$ and $1$ so it does not blow up. However, as we only really care about the behavior as $r$ tends to zero, this is a point we will ignore; we will assume that $s \le \frac 1 {100}$.

\begin{proof}
We will show that the gauge functions $\Lambda(r) = r^d \left(\log \frac 1 r\right)^{-\beta}$ are admissible for all $\beta > 0$; all that we need to prove is the growth condition. Fix $0 < r \le s$ with $s$ small, and $\eps > 0$. We need to show that there exists a constant $C_{\eps, \beta}$ for which
$$\frac{h(r)}{h(s)} \ge C_{\eps, \beta} \left(\frac r s\right)^{\eps}$$
or alternatively, that
$$\frac{s^{\eps} (\log(1/s))^{\beta}}{r^{\eps}(\log(1/r))^{\beta}}$$
is bounded below independent of $r$ and $s$. We may just as well consider the functions
$$g(r, s) = \frac{s^{\epsilon/\beta} \log s}{r^{\epsilon/\beta} \log r}.$$ 
on the triangular domain $\{(r, s) : 0 < r \le s \le \frac{1}{100}\}$. 

First, let us fix $s$; we minimize the function over $r$. The $r$-derivative is 
$$\frac{\partial g}{\partial r} = \frac{s^{\eps/\beta} r^{\eps/\beta - 1}}{(r^{\eps/\beta} \log r)^2} (-\log s) \left[\frac{\eps}{\beta} \log r + 1\right],$$
which changes sign from negative to positive at $r = e^{-\beta/\eps}$. We now split into two cases, depending on the size of $s$.

The first case is that $s \ge e^{-\beta / \eps}$, so that $g(r, s)$ is in fact minimized at $r = e^{-\beta/\eps}$. In this case, we have
$$g(r, s) \ge g(e^{-\beta/\eps}, s) = -\frac{\eps e}{\beta} s^{\eps / \beta} \log s.$$ 
If we again differentiate, but in $s$, we get
$$-\frac{\eps e }{\beta} s^{\eps/\beta - 1} \left[\frac{\eps}{\beta} \log s + 1\right]$$
which is negative due to the fact that $s \ge e^{-\beta / \eps}$. Hence, this quantity is minimized when $s = \frac{1}{100}$; this gives a lower bound of
$$g(r, s) \ge g\left(e^{-\beta/\eps}, \frac 1 {100}\right) \quad\quad \forall\, 0 < r \le s, s \ge e^{-\beta / \eps}.$$
This of course only depends on $\beta$ and $\epsilon$, which is good enough.

The second case is that $s < e^{-\beta / \eps}$. Here, we may compute the $s$-derivative, finding that
$$\frac{\partial g}{\partial s} = \frac{1}{r^{\eps / \beta} \log r} \left[\frac{\eps}{\beta} s^{\eps/\beta - 1} \log s + s^{\eps/\beta} + \right] = \frac{s^{\eps/\beta - 1}}{r^{\eps/\beta}} \frac{1}{\log r} \left[\frac{\eps}{\beta} \log s + 1 \right].$$
Since $r < 1$, this is positive; therefore, $g$ increases from its minimum value of $1$ at the bottom of the domain where $r = s$, and is again bounded below.
\end{proof}

%
%
We can also apply this technique to get positive results for Riesz capacities. Recall that for a set $E$, the $(\beta, p)-$Riesz capacity $\dot{C}_{\beta, p}$ is defined by
$$\dot{C}_{\beta, p}(E) = \inf \left\{\|g\|_p : g \ast I_{\beta} \ge \chi_E \right\}$$
where up to a normalization, $I_{\beta}(z) = |z|^{-(2 - \beta)}$ is the Riesz kernel; see, e.g. \cite{AH} for more details. There is also a dual characterization by Wolff's theorem that
$$\dot{C}_{\beta, p}(E) \simeq \sup \left\{\mu(E) : \text{supp}(\mu) \subseteq E, \dot{W}^{\mu}_{\beta, p}(z) \le 1 \, \forall z \in \mathbb{C}\right\}$$
where the homogeneous Wolff potential $\dot{W}^{\mu}_{\beta, p}$ is 
$$\dot{W}^{\mu}_{\beta, p}(z) = \int_0^{\infty} \left(\frac{\mu\big(B(z, r)\big)}{r^{2 - \beta p}}\right)^{p' - 1} \frac{dr}{r}.$$
Furthermore, it is important to note that the Riesz capacity is homogeneous of degree $2 - \beta p$, which will correspond with the Hausdorff dimension of the set under consideration. Our main result here is the following theorem:

%
%
\begin{theorem}\label{theorem:postiveRieszCapacity}
Fix any parameter $\tau = \alpha(1 + i \gamma) \in B_K \setminus \{1\}$, and a pair $(\beta, p)$ with $1 < p < \infty$ and $2 - \beta p = F_K(\alpha, \gamma)$. There is a $K$-quasiconformal map $f$ and a set $E$ such that $f$ stretches with exponent $\alpha$ and rotates with exponent $\gamma$ at every point in $E$, and $E$ has positive $(\beta, p)-$Riesz capacity.
\end{theorem}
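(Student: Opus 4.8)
The plan is to derive positive Riesz capacity from positive gauged Hausdorff measure via Wolff's theorem, after choosing the gauge so that the natural Frostman measure on the Cantor set has a uniformly bounded homogeneous Wolff potential.

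Write $d := F_K(\alpha,\gamma) = 2 - \beta p$; since $\tau \in B_K$ we have $d > 0$, and $p' - 1 = \tfrac{1}{p-1} > 0$. The first step is to invoke one of the constructions already available. If $\alpha < 1$, apply Theorem \ref{theorem:gaugedRotations} with the gauge $\Lambda(r) = r^d G(r)$, $G(r) = \bigl(\log \tfrac1r\bigr)^{-c}$ suitably cut off near $r = 1$, where $c>0$ is a large exponent to be fixed in the second step; $\Lambda$ is admissible by Corollary \ref{corl:logPerturbations}, so we obtain a $K$-quasiconformal $f$ and a Cantor-type set $E$ with $\mathcal H^{\Lambda}(E) > 0$ on which $f$ stretches like $\alpha$ and rotates like $\gamma$. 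If $\alpha > 1$, apply Theorem \ref{theorem:inverseGauge} to the $(1/\alpha,-\gamma)$-construction: $G$ also satisfies $G(r^K) = K^{-c}G(r) \lesssim G(r)$, so the extra hypothesis holds, and $f^{-1}$ stretches like $\alpha$, rotates like $\gamma$ on $f(E)$, with positive $\mathcal H^{\Lambda'}$-measure for $\Lambda'(r) = r^{d}\,G(r)^{d/(K\tilde d)}$, $\tilde d = F_K(1/\alpha,-\gamma)$. (The case $\alpha = 1$ is handled by the same scheme, with the radial stretch replaced by the pure rotation $z\mapsto z|z|^{i\delta}$.) After relabelling, in all cases we have a set $E$, a $K$-quasiconformal $f$ with the prescribed stretching and rotation on $E$, and $\mathcal H^{r^d H(r)}(E) > 0$ with $H(r) = \bigl(\log\tfrac1r\bigr)^{-a}$, where $a$ (either $c$ or $c\,d/(K\tilde d)$) may be taken as large as we like by enlarging $c$.

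Second step: fix a Frostman measure. By Frostman's lemma — or concretely by the Carleson packing estimate of Lemma \ref{lemma:carlesonEstimate} and its analogue for Theorem \ref{theorem:inverseGauge} — there is a nonzero Radon measure $\mu$ with $\operatorname{supp}\mu \subseteq E$, $0 < \mu(E) < \infty$, and $\mu(B(z,r)) \le C_1\, r^d H(r)$ for all $z \in \C$ and all small $r$, while trivially $\mu(B(z,r)) \le \mu(E)$ always. Using $2 - \beta p = d$, the homogeneous Wolff potential at an arbitrary $z$ satisfies
\[
\dot W^{\mu}_{\beta,p}(z) = \int_0^{\infty}\left(\frac{\mu(B(z,r))}{r^{d}}\right)^{p'-1}\frac{dr}{r} \le \int_0^{r_0}\bigl(C_1 H(r)\bigr)^{p'-1}\frac{dr}{r} + \mu(E)^{p'-1}\int_{r_0}^{\infty} r^{-d(p'-1)-1}\,dr.
\]
The second term is finite because $d(p'-1) > 0$. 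For the first, the substitution $u = \log\tfrac1r$ converts $\int_0^{r_0} H(r)^{p'-1}\tfrac{dr}{r}$ into $\int_{\log(1/r_0)}^{\infty} u^{-a(p'-1)}\,du$, which is finite once $a(p'-1) > 1$; we fix $a$ (hence $c$) large enough that this holds. Hence $C_0 := \sup_{z \in \C}\dot W^{\mu}_{\beta,p}(z) < \infty$.

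Finally, by the homogeneity $\dot W^{t\mu}_{\beta,p} = t^{p'-1}\dot W^{\mu}_{\beta,p}$, the measure $\nu := C_0^{-1/(p'-1)}\mu$ satisfies $\operatorname{supp}\nu \subseteq E$, $\dot W^{\nu}_{\beta,p}(z) \le 1$ for all $z$, and $\nu(E) > 0$; Wolff's dual characterization of Riesz capacity then gives $\dot C_{\beta,p}(E) \gtrsim \nu(E) > 0$, as required. The main obstacle is the choice of $H$: it must decay slowly enough for $r^d H(r)$ to stay admissible (so that Theorems \ref{theorem:gaugedRotations} and \ref{theorem:inverseGauge} apply and $E$ is genuinely large) yet fast enough that $\int_0 H(r)^{p'-1}\tfrac{dr}{r} < \infty$. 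Logarithmic gauges thread this needle precisely because the homogeneity of the capacity equals the Hausdorff dimension, $2 - \beta p = d = F_K(\alpha,\gamma)$: this is what makes the factor $r^{-d}$ in the Wolff potential cancel the principal part of $\mu(B(z,r))$, leaving only the integrable logarithmic remainder.
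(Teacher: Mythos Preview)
Your argument is correct and takes a genuinely different route from the paper. The paper does not pass through its own gauge theorems; instead it fixes explicit perturbation parameters $\sigma_{k,j_k} = R_{k,j_k}^{(2-d)/Kd}\bigl(\tfrac{k+1}{k}\bigr)^{\delta}$, checks the stretching and rotation directly for this choice, and then invokes Lemma~8.1 of \cite{ACTUTV} to evaluate the Wolff potential of the natural measure on the Cantor set as the series $\sum_n n^{-dK(p'-1)\delta}$, which converges for $\delta > 1/(dK(p'-1))$. It then performs a disjoint-union construction over a sequence $(\beta_n,p_n)$ with $p_n\to\infty$ and appeals to a comparison theorem to obtain a \emph{single} set with positive capacity for \emph{every} admissible $(\beta,p)$ simultaneously.

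Your route is more self-contained: you feed a logarithmic gauge into Theorems~\ref{theorem:gaugedRotations}/\ref{theorem:inverseGauge}, extract a Frostman measure from the Carleson packing Lemma~\ref{lemma:carlesonEstimate}, and bound the Wolff integral by hand. This avoids the external reference and reuses the machinery already built in the paper; in fact the paper itself remarks, just after its proof, that the Riesz result ``actually follows from the previous one, with the correct choice of $h$''---your argument makes that remark precise. The tradeoff is that you produce a set depending on $(\beta,p)$ through the exponent $a$, whereas the paper's union trick gives one universal example; but the stated theorem is for a fixed $(\beta,p)$, so this costs nothing. One small point: the case $\alpha = 1$ is not literally covered by Theorems~\ref{theorem:gaugedRotations} or~\ref{theorem:inverseGauge} as stated (both assume $\alpha<1$ at the construction stage), so your parenthetical about replacing the radial stretch by a pure rotation $z\mapsto z|z|^{i\delta}$ is doing real work and would need its own brief verification; the paper's proof is equally informal on this boundary case.
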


In particular, this shows that there cannot be a theorem improving the results of \cite{AIPS} to the level of Riesz capacity zero for any choice of parameters with the correct homogeneity. This stands in sharp contrast with the results of \cite{ACTUTV}, in which Riesz capacities were used to give sharper results than gauge functions alone can give. In \cite{ACTUTV}, at the critical homogeneity, there was a range of parameters $(\beta, p)$ in which extremal examples could exist, beyond which there was a negative result showing the sharpness of Riesz capacities. However, in our case, all possible indices have associated examples; thus an analogue of their theorem is not possible.

\begin{proof}
This theorem is actually much easier to prove than the last one, as the Riesz capacities of these Cantor type sets have already been estimated in \cite{ACTUTV}. We will first make the construction for a fixed $(\beta, p)$, and then extend it in such a way that the set will have positive Riesz capacities for all parameter choices simultaneously. Let $E$ be a Cantor type set as constructed in Theorem \ref{theorem:gaugedStretches}; our choice of parameters will be
$$\sigma_{k, j_k} = R_{k, j_k}^{\frac{2 - d}{dK}} \left(\frac{k + 1}{k}\right)^{\delta}$$
with $\delta$ to be chosen soon. Following the techniques of the previous proofs, we can compute that the stretching exponent is $\alpha$ at every point of $E$, while the rotation exponent is $\gamma$ on a large subset of $E$. 

Now it remains to understand the Riesz capacity of this set. Per Lemma 8.1 of \cite{ACTUTV}, if $\nu$ is the naturally distributed measure on $E$, its Wolff potential is 
$$\dot{W}^{\nu}_{\beta, p} \simeq \sum_{n = 2}^{\infty} \frac{1}{n^{dK (p' - 1) \delta}}$$
at each $x \in E$. If we select, e.g. $$\delta = 1 + \frac{1}{dK(p' - 1)}$$ then this series is convergent, the Wolff potential is uniformly bounded, and therefore the set has positive $(\beta, p)-$Riesz capacity.

Now we need to extend this from a particular parameter choice to all simultaneously. Carry out the above construction, but localized to a disk of radius $1/2$. Fix a new choice $(\beta_1, p_1)$ with $p_1 > p$, and carry out the construction with this parameter choice (meaning, with the updated value of $\delta$) in a disjoint disk of radius $1/4$. Continue in this manner with $(\beta_2, p_2)$ with $p_2 > p_1$, and so on; this gives a set with positive Riesz capacity for a sequence $(\beta_n, p_n)$ with $2 - \beta_n p_n = d$ for every $n$. If $p_n \to \infty$, a comparison theorem (e.g. Theorem 5.5.1(b) of \cite{AH}) shows that $E$ has positive capacity for all parameter choices.
\end{proof}

%
%
It is worth remarking that this theorem actually follows from the previous one, with the correct choice of $h$ (at least for these Cantor type sets). If we choose the gauge to be $h(r) = (\log 1/r)^{-1}$ for small enough $r$, then the resulting Cantor set must be larger, in a sense, than one only with positive Riesz capacity. This follows from an estimate of the $\eta_{k, j_k}$. Recall the generating relationship (\ref{eq:governing_simplified}):
$$1 = \left(\eta_{1, j_1} \cdots \eta_{N, j_N}\right)^{Kd} h\left(R_{1, j_1}^{2/d} \cdots R_{N, j_N}^{2/d} \eta_{1, j_1}^K \cdots \eta_{N, j_N}^K\right).$$

We will show that the choice of $\eta_{k, j_k}$ to satisfy this equation with this choice of gauge is typically larger than $(1 + 1/k)^{\delta}$; in particular, that means that the Cantor set naturally associated to this gauge is significantly larger than that constructed for positive Riesz capacity. To this end, suppose that $\eta_{k, j_k} \le (1 + 1/k)^{\delta}$ for all $k$. Then we have
\begin{align*}
1 &= \left(\eta_{1, j_1} \cdots \eta_{N, j_N}\right)^{Kd} h\left(R_{1, j_1}^{2/d} \cdots R_{N, j_N}^{2/d} \eta_{1, j_1}^K \cdots \eta_{N, j_N}^K\right) \\
&= (N + 1)^{K d \delta} h\left(R_{1, j_1}^{2/d} \cdots R_{N, j_N}^{2/d} (N + 1)^{K\delta}\right) \\
&= \frac{(N + 1)^{Kd \delta}}{\frac 2 d \sum_{k = 1}^N \log \frac{1}{R_{k, j_k}} - K \delta \log(N + 1)}.
\end{align*}

However, we can choose the radii $R_{k, j_k}$ as small as we desire, making the right hand side of this equation arbitrarily small. This leads to a contradiction, showing that this uniformly bounded selection of $\eta_{k, j_k}$ was in fact too small. Hence at least some of the selections $\eta_{k, j_k}$ must have been larger than $(1 + 1/k)^{\delta}$, a contradiction. Moreover, asymptotically, the choices of $\eta_{k, j_k}$ must be much larger than $(1 + 1/k)^{\delta}$, and larger choices of $\eta_{k, j_k}$ lead to a larger Cantor set.

%
%

\textbf{Acknowledgements.} I am very grateful to my advisor, Ignacio Uriarte-Tuero, who gave me this problem to work on. His helpful advice and insight was invaluable throughout the work.
%
%

\bibliography{gauged_stretches}{}
\bibliographystyle{plain}
\end{document}